\documentclass[a4paper,12pt]{amsart}
\usepackage[utf8]{inputenc}
\usepackage{amsmath,amssymb,amsthm}
\usepackage[left=3cm, right=3cm]{geometry}
\usepackage{dsfont}
\usepackage{comment}
\usepackage{esint}
\usepackage{color}
\usepackage{tikz}
\usepackage{enumerate}
\usepackage{multicol}
\usepackage{mathtools}
\usepackage[nocompress]{cite}


%

\allowdisplaybreaks
\numberwithin{equation}{section}

\mathtoolsset{showonlyrefs}
\usetikzlibrary{decorations.pathreplacing}

\newcommand{\R}{\mathbb{R}}
\newcommand{\N}{\mathbb{N}}

\newcommand{\Z}{\mathbb{Z}}

\newcommand{\eps}{\varepsilon}

\renewcommand{\div}{\operatorname{div }}
\newcommand{\curl}{\operatorname{curl }}

\newtheorem{Theorem}{Theorem}[section]

\theoremstyle{definition}

\newtheorem{remark}[Theorem]{Remark}
\newtheorem{example}{Example}
\newtheorem{theorem}[Theorem]{Theorem}
\newtheorem*{theorem*}{Theorem}
\newtheorem{proposition}[Theorem]{Proposition}

\newtheorem{corollary}{Corollary}



\begin{document}

\title{Rotations with constant Curl are constant}

\author[J. Ginster]{Janusz Ginster}
\address{Janusz Ginster\\Institut f\"ur Mathematik \\Humboldt-Universit\"at zu Berlin\\Unter den Linden 6\\ 10099 Berlin, Germany}
\email{janusz.ginster@math.hu-berlin.de}

\author[A. Acharya]{Amit Acharya}
\address{Amit Acharya\\ Dept.~of Civil \& Environmental Engineering and Center for Nonlinear Analysis \\ Carnegie Mellon University \\ 5000 Forbes Avenue \\ Pittsburgh, PA 15213, USA}
\email{acharyaamit@cmu.edu}

\subjclass[2010]{49J45, 58K45, 74C99}
\keywords{Rigidity, dislocations, regularity of rotation fields}

\begin{abstract}
\noindent We address a problem that extends a fundamental classical result of continuum mechanics from the time of its inception, as well as answers a fundamental question in the recent, modern nonlinear elastic theory of dislocations. Interestingly, the implication of our result in the latter case is qualitatively different from its well-established analog in the linear elastic theory of dislocations.

It is a classical result that if $u\in C^2(\R^n;\R^n)$ and $\nabla u \in SO(n)$ it follows that $u$ is rigid. In this article this result is generalized to matrix fields with non-vanishing $\curl$. It is shown that every matrix field $R\in C^2(\Omega \subseteq \R^3;SO(3))$ such that $\curl R = constant$ is necessarily constant. Moreover, it is proved in arbitrary dimensions that a measurable rotation field is as regular as its distributional $\curl$ allows. In particular, a measurable matrix field $R: \Omega \to SO(n)$, whose $\curl$ in the sense of distributions is smooth, is also smooth.

\end{abstract}

\maketitle

\tableofcontents

\section{Introduction}
It is a classical result of continuum mechanics, known from the time of the brothers Cosserat (1896) (according to Shield \cite{RTS73}), that if a $C^2$ deformation of a connected domain $\Omega \subset \mathbb{R}^3$ given by $y: \Omega \to \mathbb{R}^3$ with deformation gradient $\nabla y =: F$ has a constant Right Cauchy-Green tensor field, i.e., $F^TF =$ constant, then $y$ is a homogeneous deformation, i.e., $F=$ constant. Shield \cite{RTS73} gave an elegant proof (with references to other proofs by Forsyth, and Thomas) whose hypothesis was marginally weakened in\cite{JAB89}. An elementary proof using ideas from classical Riemannian Geometry arises from considering parametrizations of $\Omega$ and $y(\Omega)$ in a Rectangular Cartesian coordinate system. Then the condition $F^TF =$ constant allows associating spatially constant metric tensor component fields on the two patches; a use of Christoffel's transformation rule for the Christoffel symbols then yields $\nabla^2 y = 0$. This result implies that if the deformation gradient field of a deformation is known to be `pointwise rigid,' i.e., $\nabla y(x) = F(x) \in SO(3) \, \forall x \in \Omega$, then $F =$ constant $\in SO(3)$,  and the deformation $y$ is globally rigid. First generalizations of this result go back to Reshetnyak who proved in \cite{Re67} that if $y_k \rightharpoonup y$ in $W^{1,2}$ and $\operatorname{dist}(\nabla y_k,SO(n)) \to 0$ in measure then $\nabla y$ is necessarily a constant rotation. A proof of this result using Young measures can be found in \cite{JK88}. John proved in \cite{Jo61} that if $y\in C^1$ and $\operatorname{dist}(\nabla y,SO(n)) \leq \delta$ for a sufficiently small $\delta >0$ then $[\nabla y]_{BMO} \leq C(n) \delta$. Without the assumption that $\nabla y$ is uniformly close to $SO(n)$, Kohn proved optimal bounds for $\min_{R \in SO(n), b \in \R^n} \| y - (Rx + b) \|_{L^p}$ (but not for $\|\nabla y - R\|_{L^p}$) in \cite{Ko82}. Optimal bounds on $\nabla y -R$ in $L^2$ were derived in the celebrated work of Friesecke, James, and M\"{u}ller,  \cite{FJM02}. The authors prove that for an open, connected domain with Lipschitz boundary $\Omega \subseteq \R^n$ there exists $C(\Omega)>0$ such that for every $y \in W^{1,2}(\Omega;R^n)$ there exists a rotation $R\in SO(n)$ satisfying
\begin{equation}\label{eq: rigiditygradients}
\int_{\Omega} |\nabla y - R|^2 \, dx \leq C(\Omega) \int_{\Omega} \operatorname{dist}(\nabla y,SO(n))^2 \, dx.
\end{equation}
As pointed out in \cite{CoSc06} $L^p$-versions of the above estimate also hold for $1<p<\infty$.  Generalizations to interpolation spaces were established in \cite{CoDoMu14}. 

Regardless of the smoothness hypotheses involved, all of the above results crucially rely on the fact that the field $F$ is the gradient of some deformation $y$. Going beyond the realm of deformations, it seems natural to interpret the global rigidity question in the following way: Let  $R \in C^1(\Omega; SO(3))$ be specified with $\curl R = 0$ in $\Omega$; then $R =$ constant. Posed in this manner, it seems natural to ask whether the hypothesis $\curl R = 0$ is optimal or whether it can be further weakened. It is this question that is dealt with in this paper with an affirmative answer. Specifically, we show that global rigidity is obtained even for $\curl R =$ constant on $\Omega$. This result, for $\Omega \subseteq \mathbb{R}^2$ and $R \in C^2(\Omega;SO(2))$, was obtained in \cite{A19}. Here, we prove it for $R: \Omega \to SO(3)$ merely measurable. This three-dimensional result is based on significantly different ideas from \cite{A19}, and generates also a different proof for the 2-d case. 

Rigidity estimates similar to \eqref{eq: rigiditygradients} for non-gradient fields were first established in the linear theory and dimension $2$ in \cite{GaLePo10}. The nonlinear analogue was proved in  \cite{MSZ14}. It reads as follows: For $\Omega \subseteq \R^2$ open and connected with Lipschitz boundary there exists $C(\Omega)>0$ such that for every $F \in L^2(\Omega;\R^{2\times 2})$ such that $\curl F$ is a bounded measure there exists $R \in SO(2)$ satisfying
\begin{equation}\label{eq: rigidityincompatible}
\int_{\Omega} |F-R|^2 \, dx \leq C(\Omega) \left( \int_{\Omega} \operatorname{dist}(F,SO(2))^2 \, dx + |\curl F|(\Omega)^2 \right).
\end{equation}
For a version with mixed growth, see \cite{Gi19}. A generalization to higher dimensions was established in \cite{LL16}. Clearly, a rigidity estimate like \eqref{eq: rigidityincompatible} does not directly imply that rotation fields with a constant but non-zero $\curl$ are constant as the estimate \eqref{eq: rigidityincompatible} applied to a field with a constant $\curl$ does not provide more information than the same estimate applied to a field with a bounded but non-constant $\curl$. However, there are obviously non-constant rotation fields with a bounded $\curl$. Therefore, the proof of our result will be based on a different approach (see section \ref{sec: 2d} for the idea of the proof and its connection to the gradient setting).
Instead, the rigidity estimate \eqref{eq: rigidityincompatible} can be used to prove higher regularity for rotation fields, see Section \ref{sec: regularity}, whereas our rigidity result is based on a PDE approach, see Sections \ref{sec: 2d} and \ref{sec: 3d}.

It turns out that the question raised above is of relevance in the theory of dislocations, as explained in detail in \cite{A19}, with connections to the linear elastic theory of dislocations. Briefly, considering a nonlinear elastic material with a `single-well' elastic energy density, our result shows that a traction-free body with a constant (non-vanishing) dislocation density cannot be stress-free (such a field is computed in \cite[Sec. 5.3]{AZA20}). This is in stark contrast to the linear theory of dislocations in which the same body under identical hypotheses would necessarily be stress-free. An interesting question in this interpretation of our work is the characterization of the resulting stress field in a material with a `multiple-well' energy density, in particular, whether a stress-free state can arise for a constant dislocation density. \\

This article is organized as follows. First we introduce the needed notation. Then we prove that a regular rotation field with a constant $\curl$ is constant in dimension $2$ (Section \ref{sec: 2d}) and $3$ (Section \ref{sec: 3d}). In Section \ref{sec: regularity} we prove regularity of rotation fields in terms of the regularity of its $\curl$. This shows that the results proved in Section \ref{sec: 2d} and Section \ref{sec: 3d} apply more generally to measurable rotation fields with a constant $\curl$ in the sense of distributions.

\section{Notation}
Throughout the whole article we use the Einstein summation convention i.e., we sum over indices that appear twice.

Moreover, we denote by $Id$ the identity matrix in any dimension. For a matrix $A$ we write $A_i$ for its $i$-th row. For the set of rotations in $\R^n$ we write $SO(n) = \{R \in \R^{n\times n}: A^T A = Id, \det(A) = 1 \}$. The trace of a matrix $A \in \R^{n\times n}$ is given by $\mathrm{tr}(A) = \sum_{k=1}^n A_{kk}$, the scalar product between two matrices $A,B \in \R^{n\times n}$ is given by $A:B = \mathrm{tr}(A^TB)$. For a matrix $A \in \R^{n\times n}$ we write $A_{sym} = \frac12 (A+A^T)$ and $A_{skew} = \frac12( A - A^T)$. The spaces of symmetric or skew-symmetric matrices are denoted by $Sym(n) = \{A \in \R^{n\times n}: A^T = A\}$ and $Skew(n)= \{A\in \R^{n\times n}: A^T = -A\}$, respectively. For two vectors $a,b \in \R^3$ the cross product $a \times b \in \R^3$ is defined as usual as $(a\times b)_i = \eps_{ijk} a_j b_k$. Here, $\eps_{ijk}$ is the sign of the permutation $(ijk)$.

Let $\Omega \subseteq \R^n$  and connected. Throughout the whole paper we use standard notation for the space of $k$-times differentiable functions from $\Omega$ to $\R^m$, $C^k(\Omega;\R^m)$, the space of $p$-integrable functions (more precisely, equivalence classes of these functions) on $\Omega$ with values in $\R^m$, $L^p(\Omega;\R^m)$, Sobolev spaces, $W^{k,p}(\Omega;\R^m)$, and the space of vector-valued Radon-measures, $\mathcal{M}(\Omega;\R^m)$. For a vector-valued Radon measure $\mu$ we denote by $|\mu|$ its total variation measure. The space of functions of bounded variation $BV(\Omega;\R^m)$ consists of function $f\in L^1(\Omega;\R^m)$ whose weak derivative is a vector-valued Radon measure with finite total variation i.e., there exists $\mu \in \mathcal{M}(\Omega;\R^{n\times m})$ with $|\mu|(\Omega) < \infty$ such that for all $\varphi \in C^{\infty}_c(\Omega;\R^m)$ and $i\in\{1,\dots,n\}$ it holds
\[
\int_{\Omega} u \cdot \partial_i \varphi \, dx = - \int_{\Omega} \varphi \cdot d\mu_i.
\]
In this case we write $Du = \mu$.

In addition we recall quickly standard notation for classical differential operators. The divergence operator for a vector field $f=(f_1,\dots,f_n)$ on a subset of $\R^n$ is given by $\div(f) = \sum_{k=1}^n \partial_k f_k$. For a vector field on a subset of $\R^2$ we write $\curl(f) = \partial_1 f_2 - \partial_2 f_1$, for a vector field $f$ on a subset of $\R^3$ the $i$-th component of the vector field $\curl(f)$ is given by $\curl(f)_i = \eps_{ijk} \partial_j f_k$. For arbitrary $n \in \N$ we generalize this notation to $\operatorname{Curl}(f) = \left( \partial_j f_k - \partial_k f_j \right)_{j,k = 1}^n$. In dimension $2$ and $3$ the notions $\curl$ and $\operatorname{Curl}$ can easily be identified. For matrix fields $\operatorname{Curl}$, $\div$ and $\curl$ will always be applied rowwise.

We recall that for a function $f \in L^1_{loc}(\Omega;\R^n)$ we say that for $\mu \in \mathcal{M}(\Omega; \R^{n\times n})$ it holds $\operatorname{Curl}(f) = \mu$ in the sense of distributions if we have for all $\varphi \in C^{\infty}_c(\Omega;\R)$
\[
\int_{\Omega} f_k \partial_j \varphi - f_j \partial_k \varphi_k \, dx = - \int_{\Omega} \varphi \, d\mu_{jk}.
\]
Note that a function $\alpha \in L^1_{loc}(\Omega;\R^m)$ can always be associated to a vector-valued Radon measure $\mu \in \mathcal{M}(\Omega;\R^m)$ through $\mu_{\alpha}(A) = \int_A \alpha(x) \, dx$. For $f, \alpha \in L^1_{loc}$ we also write $\operatorname{Curl} f = \alpha$ instead of $\operatorname{Curl} f = \mu_{\alpha}$.

\section{Rigidity for Rotation Fields in Dimension $\mathbf{2}$} \label{sec: 2d}

We start by reconsidering the case $n=2$. In \cite{A19} it was shown that a function $R \in C^2(\Omega;SO(2))$ such that $\curl R$ is constant is necessarily constant. 
In this section we give an alternative proof to this statement which uses the idea of the proof for gradients. A similar strategy will be used in the three-dimensional setting.

Let us quickly recall the argument for gradients in dimension $n$. Let $R = \nabla u \in C^1(\Omega;SO(n))$ for some $u\in C^2(\Omega;\R^n)$. We note that $\operatorname{cof } \nabla u = \nabla u$, $\div \operatorname{cof }(\nabla u) = 0$ and $|\nabla u|^2 = n$. Thus, $\Delta u = 0$ and $0 = \Delta |\nabla u|^2$. Then one computes $0 = \Delta |\nabla u|^2 = 2 \nabla (\Delta u) : \nabla u + |\nabla^2 u|^2 = |\nabla^2 u|^2$. Consequently, $\nabla u = R$ is constant. 
 
 \begin{theorem}\label{thm: 2d}
  Let $\Omega \subseteq \R^2$ be open and connected. Let $R \in C^2(\Omega ;SO(2))$ and $\alpha \in \R^2$ such that $\curl R = \alpha$. Then $R$ is constant.
 \end{theorem}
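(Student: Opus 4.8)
The plan is to mirror the gradient argument recalled just above, but working directly with the rows of $R$ instead of introducing a potential. Two purely algebraic facts about $SO(2)$ carry the argument. First, since the rows of $R$ are orthonormal, $|R|^2 = \mathrm{tr}(R^T R) = 2$ is constant on $\Omega$. Second, every $R\in SO(2)$ satisfies $\operatorname{cof} R = R$, which concretely reads $R_{22} = R_{11}$ and $R_{21} = -R_{12}$. I would use this second identity to convert the prescribed $\curl$-information into $\div$-information: a direct computation gives $\div R_1 = \partial_1 R_{11} + \partial_2 R_{12} = \partial_1 R_{22} - \partial_2 R_{21} = \curl R_2 = \alpha_2$ and, symmetrically, $\div R_2 = -\curl R_1 = -\alpha_1$. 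Hence both $\curl R_i$ and $\div R_i$ are constant for $i=1,2$.

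Next I would invoke the two-dimensional identities $\Delta f_1 = \partial_1 \div f - \partial_2 \curl f$ and $\Delta f_2 = \partial_2 \div f + \partial_1 \curl f$, valid for any $f\in C^2$ on a planar domain. Applied rowwise to $R$, the constancy of $\div R_i$ and $\curl R_i$ forces $\Delta R_i = 0$, i.e. $R$ is rowwise harmonic. This is exactly the analogue of $\Delta u = 0$ in the gradient proof, the only difference being that there $\div R_i$ and $\curl R_i$ vanish, whereas here they are merely constant. Finally I would differentiate the constraint $|R|^2 = 2$ twice: since the left-hand side is constant, $0 = \Delta |R|^2 = 2\,R : \Delta R + 2|\nabla R|^2 = 2|\nabla R|^2$, using $\Delta R = 0$. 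Therefore $\nabla R \equiv 0$, and since $\Omega$ is connected, $R$ is constant.

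The conceptual crux, and the only step that is not a routine computation, is the observation that the hypothesis forces $\div R$ to be constant even though only $\curl R$ is prescribed. This is precisely where the geometry of $SO(2)$ enters through $\operatorname{cof} R = R$, paralleling the use of $\operatorname{cof}\nabla u = \nabla u$ together with the Piola identity $\div \operatorname{cof}\nabla u = 0$ in the classical gradient case; once both first-order differential quantities are constant, the harmonicity of $R$ and the Bochner-type identity $\Delta|R|^2 = 2|\nabla R|^2$ close the argument at once. As a cross-check one may instead write $R$ locally through an angle $\theta\in C^2$, note that $\curl R = \alpha$ is equivalent to $\nabla\theta = -R^T\alpha$, and deduce $(\partial_1\theta)^2 + (\partial_2\theta)^2 = 0$ from the equality of the mixed second derivatives of $\theta$; this yields a shorter but less structural proof that does not obviously suggest the three-dimensional generalization pursued in Section \ref{sec: 3d}.
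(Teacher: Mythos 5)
Your proof is correct and follows essentially the same route as the paper's: the paper parametrizes $R = \begin{pmatrix} e_1 & e_2 \\ -e_2 & e_1 \end{pmatrix}$ with $e_1^2+e_2^2=1$, so that the two components of $\curl R = \alpha$ are precisely your two statements that the rowwise $\curl$ and $\div$ of $R$ are constant (your cofactor identity $\operatorname{cof}R = R$ is exactly what this parametrization encodes), after which both arguments obtain entrywise harmonicity by cross-differentiation and close with the same Bochner-type computation $0 = \Delta|R|^2 = 2\,R:\Delta R + 2|\nabla R|^2$ on the connected set $\Omega$. The differences are purely notational, so nothing needs to be changed.
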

\begin{proof}
As $R(x) \in SO(2)$ for all $x \in \Omega$, there exists a $C^2$-vector field $e : \Omega \to \R^2$ such that 
 \[
  e_1(x)^2 + e_2(x)^2 = 1 \text{ and } R(x) = \begin{pmatrix}
                                                   e_1(x) && e_2(x) \\ -e_2(x) && e_1(x)
                                                  \end{pmatrix} \text{ for all } x\in \Omega.
 \]
As $\curl R = \alpha$, we find that
\begin{align*}
 \partial_1 e_2 - \partial_2 e_1 &= \alpha_1, \\
 \partial_1 e_1 + \partial_2 e_2 &= \alpha_2,
\end{align*}
from which we derive
\begin{align*}
 \partial_1 \partial_1 e_2 - \partial_1 \partial_2 e_1 = 0, \\
 \partial_2 \partial_1 e_2 - \partial_2 \partial_2 e_1 = 0, \\
 \partial_1 \partial_1 e_1 + \partial_1 \partial_2 e_2 = 0, \\
 \partial_2 \partial_1 e_1 + \partial_2 \partial_2 e_2 = 0.
\end{align*}
Adding the fourth to the first equation and subtracting the second from the third equation we find that
\[
 \Delta e_1 = \Delta e_2 = 0.
\]
Using that $e_1(x)^2 + e^2(x) = 1$ for all $x \in \Omega$, we obtain
\[
 0 = \Delta(e_1^2 + e_2^2) = 2 e_1 \Delta e_1 + 2 |\nabla e_1|^2 + 2e_2 \Delta e_2 + 2|\nabla e_2|^2 = |\nabla e_1|^2 + |\nabla e_2|^2.
\]
As $\Omega$ is connected this implies that $e$ (and consequently $R$) is constant.
\end{proof}

In view of Theorem \ref{thm: 2d} we see that the generalized rigidity estimate \eqref{eq: rigidityincompatible} does not provide the optimal estimate for rotation fields with a constant $\curl$.  The na\"{i}ve extension of the generalized rigidity estimate \eqref{eq: rigidityincompatible} incorporating the result of Theorem \ref{thm: 2d} would allow the subtraction of a constant from the $\curl$ on the right hand side: For every open, bounded and connected set $\Omega \subseteq \R^2$ with Lipschitz boundary there exists $C(\Omega)>0$ such that for every $F\in L^2(\Omega;\R^{2\times 2})$ with $\curl F \in \mathcal{M}(\Omega;\R^2)$ and $\alpha \in \R^2$ there exists $R\in SO(2)$ satisfying
\[
\int_{\Omega} |F - R|^2 \, dx \leq C(\Omega) \left( \int_{\Omega} \operatorname{dist}(F,SO(2))^2 \, dx + |\curl(F) - \mu|(\Omega)^2\right),
\]
where $\mu = \alpha \, \mathcal{L}^2$. 

However, the following example shows that a statement of this type cannot be true as it does not hold true in the linearized setting, c.f. the discussion in \cite{A19}.

\begin{example}
Let $\Omega = B_1(0)$. For $\eps>0$ we define $F_{\eps}: \Omega \to \R^{2\times 2}$ by 
\[
F_{\eps}(x) = Id + \eps \begin{pmatrix} 0 && x_1 \\ -x_1 && 0 
\end{pmatrix}.
\]
First we notice that $\curl F_{\eps} = \eps \begin{pmatrix} 1\\0\end{pmatrix}$.
Next, we observe that $\fint_{\Omega} F_{\eps} \, dx = Id$ and therefore $\int_{\Omega} |F_{\eps} - Id|^2 \, dx \leq \int_{\Omega} |F_{\eps} - R|^2 \, dx$ for all $R\in SO(2)$. Now, we compute $\int_{\Omega} |F_{\eps} - Id|^2 \, dx = \int_{\Omega} 2\eps^2 x_1^2 \, dx = \frac{\pi}2 \eps^2$. On the other hand, a second order Taylor expansion at $Id$ shows that
\[
\operatorname{dist}(F_{\eps}(x),SO(2))^2 \leq |(F_{\eps}(x) - Id)_{sym}|^2 + C |F_{\eps} - Id|^3 \leq C \eps^3.
\]
Consequently, $\int_{\Omega} \operatorname{dist}(F_{\eps}(x),SO(2))^2 \, dx \leq C \eps^3$. In particular we see that there cannot exist a constant $C(\Omega) > 0$ such that for every $\eps >0$ there exists $R_{\eps} \in SO(2)$ satisfying
\[
\int_{\Omega} |F_{\eps} - R_{\eps}|^2 \, dx \leq C(\Omega) \left( \int_{\Omega} \operatorname{dist}(F_{\eps}, SO(2))^2 \, dx + \left(\left|\curl(F) - \eps \begin{pmatrix} 1 \\ 0 \end{pmatrix}\right|(\Omega)\right)^2 \right).
\]
\end{example}

\section{Rigidity for Rotation Fields in Dimension $\mathbf{3}$} \label{sec: 3d}

This section is devoted to prove that in three dimensions a rotation field whose $\curl$ is constant has to be locally constant.

\subsection{A Simple Argument for $\mathbf{\Omega = \R^3}$}
We start with a simple argument for $\Omega = \R^3$ which is based on Stokes' theorem.

\begin{theorem}
Let $R \in C^1(\R^3;SO(3))$ such that $\curl R = \alpha$ for some $\alpha \in \R^{3\times 3}$. Then $\alpha = 0$ and $R$ is constant.
\end{theorem}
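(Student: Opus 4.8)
The plan is to exploit the pointwise constraint $R(x)\in SO(3)$, which forces each row $R_i$ of $R$ to be a \emph{unit} vector field, so that $|R_i(x)| = 1$ for every $x\in\R^3$. Reading the hypothesis rowwise, each $R_i\in C^1(\R^3;\R^3)$ satisfies $\curl R_i = \alpha_i$, where $\alpha_i$ denotes the $i$-th row of $\alpha$. The decisive observation is a growth mismatch: the circulation of a uniformly bounded vector field around a loop grows at most linearly in the size of the loop, whereas the flux of a \emph{constant} curl through a flat disk grows quadratically. Stokes' theorem, applied on large disks, will then force $\alpha_i=0$.

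Concretely, I would fix a unit vector $\nu\in\R^3$ and, for $\rho>0$, take $D_\rho$ to be the flat disk of radius $\rho$ centered at the origin with unit normal $\nu$. By Stokes' theorem together with $\curl R_i=\alpha_i$,
\[
\oint_{\partial D_\rho} R_i\cdot d\ell \;=\; \int_{D_\rho} \curl R_i \cdot \nu\, dA \;=\; (\alpha_i\cdot\nu)\,\pi\rho^2 .
\]
Since $|R_i|\equiv 1$, the left-hand side is bounded in absolute value by the length $2\pi\rho$ of $\partial D_\rho$, so $|\alpha_i\cdot\nu|\le 2/\rho$ for every $\rho>0$. Letting $\rho\to\infty$ gives $\alpha_i\cdot\nu=0$, and since $\nu$ was arbitrary, $\alpha_i=0$ for each $i$. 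Hence $\alpha=0$ and $\curl R=0$.

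It then remains to upgrade $\curl R=0$ to constancy. Because $\R^3$ is simply connected and each row $R_i$ is curl-free, the Poincar\'e lemma yields potentials $u_i\in C^2(\R^3)$ with $R_i=\nabla u_i$; collecting them gives $R=\nabla u$ for some $u\in C^2(\R^3;\R^3)$ with $\nabla u(x)\in SO(3)$ for all $x$. At this point the classical rigidity result recalled in the introduction applies verbatim (alternatively one reruns the elementary computation there: $\Delta u=\div\operatorname{cof}\nabla u=0$, whence $0=\Delta|\nabla u|^2$ forces $\nabla^2 u\equiv 0$), and we conclude that $R=\nabla u$ is constant.

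I expect the only genuinely delicate point to be the Stokes step, and specifically the recognition that it is the \emph{uniform} bound $|R_i|\equiv 1$ coming from membership in $SO(3)$ — not mere local boundedness — that defeats the quadratic growth of the flux. Everything else is routine: passing from the vanishing of $\alpha_i\cdot\nu$ for all $\nu$ to $\alpha=0$, and invoking the already-established gradient rigidity on the simply connected domain. I would also flag that this argument is special to $\Omega=\R^3$, since it uses both arbitrarily large loops and global simple connectivity; a bounded or topologically nontrivial $\Omega$ will instead require the PDE approach developed in the remainder of this section.
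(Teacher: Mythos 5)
Your proof is correct and follows essentially the same route as the paper: Stokes' theorem on flat disks of radius $\rho$, pitting the quadratic growth $\pi\rho^2\,|\alpha_i\cdot\nu|$ of the flux against the linear bound $2\pi\rho$ on the circulation coming from $|R_i|\equiv 1$, and then invoking classical gradient rigidity once $\alpha=0$. The only differences are cosmetic: the paper argues by contradiction at a single large radius with the full matrix (using $\|R\tau\|=1$), while you work rowwise and let $\rho\to\infty$, and you make explicit the Poincar\'e-lemma step that the paper leaves implicit.
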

\begin{proof}
If $\alpha = 0$ then the result follows by the classical rigidity result for gradients. So we assume that $\alpha \neq 0$. Hence, there exists $v \in \R^3$ such that $\alpha v \neq 0$. Up to a rotation we may assume that $v = \begin{pmatrix} 0 \\ 0 \\ 1 \end{pmatrix}$. Now, we define for $\rho > 0$ the two-dimensional disk and circle with radius $\rho$ as
\begin{align}
&D^{(2)}_{\rho} = \left\{ (x_1,x_2,x_3) \in \R^3: x_1^2 + x_2^2 < \rho^2, x_3=0 \right\} \\ 
\text{ and } &S_{\rho}^{(2)} = \left\{ (x_1,x_2,x_3) \in \R^3: x_1^2 + x_2^2 = \rho^2, x_3 = 0 \right\}.
\end{align}
We choose $v$ to be the normal to $D_{\rho}^{(2)}$ and denote by $\tau \in S^2$ the corresponding positively oriented tangent to $S^{(2)}_{\rho}$.
Using Stokes' theorem we compute
\begin{align}
\pi \rho^2 \, \left\|\alpha v\right\| = \left\|\int_{D^{(2)}_{\rho}} \curl R \cdot \nu \, \mathcal{H}^2\right\| = \left\|\int_{S^{(2)}_{\rho}} R \tau \, d\mathcal{H}^1\right\| \leq 2\pi \rho.
\end{align}
For the last inequality we used that $\| R \tau \| = 1$ since $R \in SO(3)$. This yields a contradiction for every $\rho >  \frac2{\| \alpha v\|}$.
\end{proof}

\begin{remark}
The proof shows that there cannot be $R \in C^1(\Omega;SO(3))$ with $\curl R = \alpha$ and  $B_{2\|\alpha\|_{op}+\delta}(x) \subseteq \Omega$ for some $x \in \Omega$, $\delta > 0$ and $\| \alpha \|_{op} = \sup\{ \alpha v: \|v\| = 1\}$. 
\end{remark}

\subsection{The General Result}

In this section we prove our main result, namely that on any open and connected set $\Omega \subseteq \R^3$ every sufficiently regular function $R: \Omega \to SO(n)$ with a constant $\curl$ is constant. 

Our approach is quite similar to the proof of Theorem \ref{thm: 2d}, namely we first show that a field of rotations $R:\Omega \to \R^3$ satisfies a linear elliptic PDE. Together with the assumption that $\curl R$ is constant this will yield an equality for $|\nabla R|^2$ in terms of $R$ and $\curl R$.

Before we prove the main result we collect a few results that will be needed later.

\begin{proposition}\label{prop: collection3d}
Let $\Omega \subseteq \R^3$ be open and $R \in C^2(\Omega; SO(3))$ with $\curl R = \alpha$ for some constant matrix $\alpha \in \R^{3\times 3}$. Then the following hold:
\begin{enumerate}[(i)]
\item $\div R_i = \varepsilon_{ijk} \, \alpha_j \cdot R_k$ for $i\in\{1,2,3\}$. \label{item: collecti}
\item $\Delta R_i = \varepsilon_{ijk} \nabla (\alpha_j \cdot R_k)$. \label{item: collectii}
\item $|\nabla R|^2 =- \mathrm{tr} (R^T \alpha R^T \alpha)$.\label{item: collectiii}
\item $\mathrm{tr}(R^T \alpha R^T \alpha) = |(R^T \alpha)_{sym}|^2 - |(R^T \alpha)_{skew}|^2$. \label{item: collectiv}
\item If $R(x_0) = Id$ then $|\div(R)(x_0)|^2 = 2 |\alpha_{skew}|^2$. \label{item: collectv}
\item $\sum_{i=1}^3 |(\nabla R_i)_{sym}|^2 \geq \frac13 |\div(R)|^2$. \label{item: collectvi}
\item $\sum_{i=1}^3 |(\nabla R_i)_{skew}|^2 = \frac12 |\alpha|^2$. \label{item: collectvii}
\end{enumerate}
\end{proposition}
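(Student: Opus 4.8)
The plan is to prove the seven statements in the listed order, since each draws on its predecessors, and to build everything on two pointwise facts valid for any $R\in C^2(\Omega;SO(3))$: the orthonormality of the rows, $R_i\cdot R_j=\delta_{ij}$, and the cofactor identity $\mathrm{cof}(R)=R$, i.e. $R_{ip}=\tfrac12\varepsilon_{ijk}\varepsilon_{pqr}R_{jq}R_{kr}$. For part (i) I would differentiate the cofactor identity: writing $\div R_i=\partial_p R_{ip}$ and using the product rule, the two resulting terms coincide after relabelling $(j,q)\leftrightarrow(k,r)$, so that $\div R_i=\varepsilon_{ijk}\varepsilon_{pqr}(\partial_p R_{jq})R_{kr}$; recognizing $\varepsilon_{pqr}\partial_p R_{jq}=(\curl R_j)_r=\alpha_{jr}$ then gives $\div R_i=\varepsilon_{ijk}\,\alpha_j\cdot R_k$. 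Part (ii) is immediate from the vector-calculus identity $\Delta R_i=\nabla(\div R_i)-\curl(\curl R_i)$ applied rowwise: since $\curl R_i=\alpha_i$ is constant, $\curl\curl R_i=0$, and substituting (i) yields $\Delta R_i=\varepsilon_{ijk}\nabla(\alpha_j\cdot R_k)$.

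For part (iii) I would mimic the two-dimensional argument, where the key was $\Delta(e_1^2+e_2^2)=0$. Here $|R|^2=\mathrm{tr}(R^TR)=3$ is constant, so $0=\Delta|R|^2=2|\nabla R|^2+2\,R:\Delta R$, whence $|\nabla R|^2=-\sum_i R_i\cdot\Delta R_i$. Substituting (ii) reduces the claim to the purely pointwise identity $\sum_i\varepsilon_{ijk}R_i\cdot\nabla(\alpha_j\cdot R_k)=\mathrm{tr}(R^T\alpha R^T\alpha)$, which I would verify by expanding the derivative, writing $\partial_p R_{kr}=(W^{(p)}R)_{kr}$ with the skew matrix $W^{(p)}=(\partial_p R)R^T$, and collapsing the resulting $\varepsilon$--$R$ contractions using the cofactor identity once more. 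Part (iv) is pure linear algebra: for any $B$ one has $B=B_{sym}+B_{skew}$, and taking the trace of $B^2$ kills the mixed term (the trace of a symmetric times a skew matrix vanishes), leaving $\mathrm{tr}(B^2)=|B_{sym}|^2-|B_{skew}|^2$; apply this with $B=R^T\alpha$.

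The remaining three statements are short consequences. For part (v), evaluating (i) at a point $x_0$ with $R(x_0)=Id$ gives $\div R_i(x_0)=\varepsilon_{ijk}\alpha_{jk}$; the vector $(\varepsilon_{ijk}\alpha_{jk})_i$ is twice the axial vector of $\alpha_{skew}$, and the identity $\varepsilon_{ijk}\varepsilon_{mjk}=2\delta_{im}$ yields $|\div R(x_0)|^2=2|\alpha_{skew}|^2$. Part (vi) is a Cauchy--Schwarz inequality applied rowwise: since $\div R_i=\mathrm{tr}(\nabla R_i)=(\nabla R_i)_{sym}:Id$, one gets $(\div R_i)^2\le|(\nabla R_i)_{sym}|^2\,|Id|^2=3|(\nabla R_i)_{sym}|^2$, and summing over $i$ gives the claim. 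For part (vii) I would use that the skew part of the gradient of a vector field is equivalent to its curl, with $|(\nabla R_i)_{skew}|^2=\tfrac12|\curl R_i|^2$ (again from $\varepsilon_{ijk}\varepsilon_{mjk}=2\delta_{im}$); since $\curl R_i=\alpha_i$, summing over $i$ produces $\sum_i|(\nabla R_i)_{skew}|^2=\tfrac12|\alpha|^2$.

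The main obstacle is the pointwise identity at the heart of part (iii): unlike the two-dimensional case, where constancy of the curl forces each component to be harmonic and the computation terminates at once, in three dimensions $\div R_i\ne0$, so $\Delta R_i\ne0$ and one must actually evaluate $-R:\Delta R$. The bookkeeping of the $\varepsilon$-tensor contractions together with repeated use of $\mathrm{cof}(R)=R$ and $R_i\cdot R_j=\delta_{ij}$ is where the real work lies; as a consistency check one can verify the outcome against (iv) via $|(R^T\alpha)_{sym}|^2+|(R^T\alpha)_{skew}|^2=|R^T\alpha|^2=|\alpha|^2$, using $RR^T=Id$.
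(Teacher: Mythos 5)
Your parts (i), (ii), (iv), (v), (vi) and (vii) are correct and are essentially the paper's own arguments: your differentiated cofactor identity is the index form of the paper's starting point $2R_i=\varepsilon_{ijk}\,R_j\times R_k$ combined with $\div(a\times b)=\curl a\cdot b-a\cdot\curl b$; your (ii) uses the same identity $\curl\curl=-\Delta+\nabla\div$; (iv) is the same trace computation; and (v)--(vii) are the same elementary manipulations (your (v) in fact lands on the correct factor, $|\div(R)(x_0)|^2=2|\alpha_{skew}|^2$, whereas the final line of the paper's proof of (v) states it with the factor on the wrong side, a typo).

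The gap is in (iii), which is the heart of the proposition, as you yourself note. Your reduction of (iii) to the pointwise identity $\varepsilon_{ijk}\,R_i\cdot\nabla(\alpha_j\cdot R_k)=\mathrm{tr}(R^T\alpha R^T\alpha)$ is exactly the paper's, but you then only announce that you \emph{would} verify it by writing $\partial_pR=W^{(p)}R$ and ``collapsing'' the $\varepsilon$--$R$ contractions; no proof is given, and this collapse is not routine bookkeeping. What is genuinely needed --- and what the paper isolates as a separate result, Proposition \ref{prop: derivative} with the representation formula \eqref{eq: representationDR2}, proved by a cyclic symmetrization exploiting the skew-symmetry of $R^T\partial_iR$ --- is the \emph{inversion} of the linear relation between $\nabla R$ and $\curl R$: the ansatz $\partial_pR=W^{(p)}R$ merely parametrizes the unknown $\nabla R$ by skew matrices, and one must still express the $W^{(p)}$ back in terms of $\alpha$ and $R$ before $-R:\Delta R$ can be evaluated. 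Your route can in fact be completed: setting $W^{(m)}_{ks}=\varepsilon_{ksr}v^m_r$ and $\tilde v_{ml}=v^m_rR_{rl}$, the definition $\alpha_{jl}=\varepsilon_{lpq}\partial_pR_{jq}$ together with the determinant identity $\varepsilon_{jsr}R_{jm}=\varepsilon_{mbc}R_{sb}R_{rc}$ gives
\begin{equation}
R^T\alpha=\tilde v-\mathrm{tr}(\tilde v)\,Id,
\qquad\text{hence}\qquad
\tilde v=R^T\alpha-\tfrac12\mathrm{tr}(R^T\alpha)\,Id
\end{equation}
after taking the trace; and expanding your left-hand side via $\varepsilon_{ijk}\varepsilon_{ksr}=\delta_{is}\delta_{jr}-\delta_{ir}\delta_{js}$ yields $\mathrm{tr}(R^T\alpha\,\tilde v)-\mathrm{tr}(R^T\alpha)\,\mathrm{tr}(\tilde v)$, which upon substituting the inversion equals $\mathrm{tr}(R^T\alpha R^T\alpha)$, as required. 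But as written, the central identity of (iii) is asserted rather than proved, and the consistency check against (iv) that you offer at the end is a true identity independent of (iii), so it cannot substitute for the missing computation.
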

\begin{proof}
As $R$ takes values in $SO(3)$ we note that the rows of $R$ form an orthonormal frame. Hence, for $i\in \{1,2,3\}$ we have
\[
2 R_i = \varepsilon_{ijk}\, R_j \times R_k.
\]
Consequently, we can compute $\div R_i$ as follows
\begin{align}
2\, \div(R_i) = \varepsilon_{ijk} \div\left( R_j \times R_k \right) &= \varepsilon_{ijk} \left( \curl(R_j) \cdot R_k - R_j \cdot \curl (R_k) \right) \\
& = \varepsilon_{ijk} \,(\alpha_j \cdot R_k - R_j \cdot \alpha_k) \\
&= 2 \varepsilon_{ijk} \, \alpha_j \cdot R_k.
\end{align}
This shows \eqref{item: collecti}. Now we recall the well-known identity $\curl \curl = -\Delta + \nabla \div$. As $\curl R$ is constant, this yields
\begin{equation}\label{eq: laplaceR}
0 = -\Delta R_i + \nabla \div R_i,
\end{equation}
which shows in combination with \eqref{item: collecti} claim \eqref{item: collectii}. For \eqref{item: collectiii} we first observe for $i \in \{1,2,3\}$ that
\[
0 = \Delta (|R_i|^2) = 2 \Delta(R_i) \cdot R_i + 2 |\nabla R_i|^2.
\]
In combination with \eqref{item: collectii} and \eqref{eq: laplaceR} this implies
\begin{align} \label{eq: normgradientR}
-|\nabla R|^2 = \eps_{ijk} \nabla (\alpha_j \cdot R_k) \cdot R_i = \eps_{ijk}  \,\alpha_{jl} \,  \left(\partial_m R_{kl}\right) R_{im}.
\end{align}
Next, we use \eqref{eq: representationDR2} i.e., we have for $m,k,l \in \{1,2,3\}$
\[
2 \left(\partial_m R\right)_{kl} = \varepsilon_{rml} \alpha_{kr}  + \varepsilon_{rsl} R_{ks} \left( R^T \alpha \right)_{mr} + \varepsilon_{rsm} R_{ks} \left( R^T \alpha \right)_{lr}.\]
Plugging this identity into \eqref{eq: normgradientR} yields
\begin{align}
2 \eps_{ijk}  \,\alpha_{jl} \,  \left(\partial_m R_{kl}\right) R_{im} = &\eps_{ijk}  \,\alpha_{jl} \,  R_{im} \, \varepsilon_{rml} \alpha_{kr}  \\ 
&+ \eps_{ijk}  \,\alpha_{jl} \,  R_{im} \, \varepsilon_{rsl} R_{ks} \left( R(x)^T \alpha \right)_{mr} \\
&+ \eps_{ijk}  \,\alpha_{jl} \,  R_{im} \varepsilon_{rsm} R_{sr} \left( R^T \alpha \right)_{lr} \\
 =:& (I) + (II) + (III).
\end{align}
No we compute 
\begin{align*}
(I) &= \eps_{ijk}  \,\alpha_{jl} \,  R_{im} \, \varepsilon_{rmq} \alpha_{kr} = \eps_{ijk}  \,\alpha_{jl} \, (\alpha_k \times R_i)_{l} = \eps_{ijk} \, (\alpha_k \times R_i) \cdot \alpha_j, \\
(II) &= \eps_{ijk}  \,\alpha_{jl}  \, \varepsilon_{rsl} R_{ks} \,  \alpha_{ir} \\ &= \eps_{ijk} \, (R_k \times \alpha_j)_r \alpha_{ir} =\eps_{ijk} \, (R_k \times \alpha_j) \cdot \alpha_i = -\eps_{ijk} (\alpha_j \times R_k) \cdot \alpha_i = -(I), \\
(III) &= \eps_{ijk}  \,\alpha_{jl} \,  R_{im} \varepsilon_{rsm} R_{ks} \left( R^T \alpha \right)_{lr} \\
&= \eps_{ijk} \, \alpha_{jl} \, (R_k \times R_i)_{r} \, \left( R^T \alpha \right)_{lr} \\
&= \eps_{ijk} (R_k \times R_i)_r\, (\alpha R^T \alpha)_{jr} \\ &= 2 R_{jr} \, (\alpha R^T \alpha)_{jr} = 2 (R^T \alpha R^T \alpha)_{rr} = \mathrm{tr}(R^T\alpha R^T \alpha).
\end{align*}
Combining \eqref{eq: normgradientR}, (I), (II) and (III) yields \eqref{item: collectiii}. 

For \eqref{item: collectiv} we simply compute
\begin{align*}
\mathrm{tr}(R^T\alpha R^T\alpha) = &(R^T\alpha)^T : (R^T\alpha) \\ = &\left((R^T\alpha)_{sym} - (R^T\alpha)_{skew} \right): \left( (R^T\alpha)_{sym} + (R^T\alpha)_{skew} \right) \\  = &\left| (R^T \alpha)_{sym} \right|^2 - \left| (R^T \alpha)_{skew} \right|^2.
\end{align*}
Next, we assume that $R(x_0) = Id$. By \eqref{item: collecti} we have that $\div(R_i)(x_0) = \eps_{ijk} \, \alpha_j \cdot R_k(x_0) = \eps_{ijk} \alpha_{jk}$. Consequently,
\[
\alpha_{skew} = \frac12 \begin{pmatrix}
0 && \div(R_3)(x_0) && -\div(R_2)(x_0) \\ -\div(R_3)(x_0) && 0 && \div(R_1)(x_0) \\ \div(R_2)(x_0) && -\div(R_1)(x_0) && 0
\end{pmatrix}
\]
and therefore $|\alpha_{skew}|^2 = 2 |\div(R)(x_0)|^2$, which is \eqref{item: collectv}. \\
For \eqref{item: collectvi}, we estimate
\begin{align}
\sum_{i=1}^3 |(\nabla R_i)_{sym}|^2 &\geq \sum_{i=1}^3 \left( (\nabla R_i)_{11}^2 + (\nabla R_i)_{22}^2 + (\nabla R_i)_{33}^2 \right) \\
&\geq \sum_{i=1}^3 \frac13 \left(\mathrm{tr}(\nabla R_i)\right)^2 \\
&= \frac13 \sum_{i=1}^3 \left( \div(R_i) \right)^2 = \frac13 |\div(R)|^2. 
\end{align}
Eventually, we prove \eqref{item: collectvii}. We observe for $i \in \{1,2,3\}$
\begin{align}
(\nabla R_i)_{skew} &= \frac12 \begin{pmatrix}
0 && \partial_2 R_{i1} - \partial_1 R_{i2} && \partial_3 R_{i1} - \partial_1 R_{i3} \\ \partial_1 R_{i2} - \partial_2 R_{i1} && 0 && \partial_3 R_{i2} - \partial_2 R_{i3} \\ \partial_1 R_{i3} - \partial_3 R_{i1} && \partial_2 R_{i3} - \partial_3 R_{i2} && 0
\end{pmatrix} \\
&= \frac12 \begin{pmatrix}
0 && -\alpha_{i3} && \alpha_{i2} \\ \alpha_{i3} && 0 && -\alpha_{i1} \\ -\alpha_{i2} && \alpha_{i1} && 0
\end{pmatrix}.
\end{align}
Therefore,
\begin{equation}
\sum_{i=1}^3 \left| (\nabla R_i)_{skew} \right|^2 = \sum_{i=1}^3 \frac12 |\alpha_i|^2 = \frac12 |\alpha|^2.
\end{equation}
\end{proof}

Armed with the results from Proposition \ref{prop: collection3d} we can now show that every field of rotations with a constant $\curl$ has to be locally constant.

\begin{theorem}\label{thm: 3d}
Let $\Omega \subseteq \R^3$ open and connected, and $R \in C^2(\Omega;SO(3))$ such that $\curl R = \alpha$ for some $\alpha \in R^{3\times 3}$. Then $R$ is constant.
\end{theorem}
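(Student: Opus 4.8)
The plan is to feed the identities of Proposition \ref{prop: collection3d} into a single pointwise inequality at an arbitrary point $x_0\in\Omega$ and to show that this inequality can be satisfied only if $\alpha=0$. Since the quantity $(R^T\alpha)_{skew}$ and the divergence $\div R$ both depend on $R$ in an awkward way, I would first reduce to the case $R(x_0)=Id$. Setting $\tilde R := R(x_0)^T R$ one has $\tilde R\in C^2(\Omega;SO(3))$, $\tilde R(x_0)=Id$, and, because $\curl$ acts rowwise and $R(x_0)^T$ is a constant matrix, $\curl \tilde R = R(x_0)^T\alpha =: \tilde\alpha$ is again a constant matrix with $|\tilde\alpha|=|\alpha|$. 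It therefore suffices to prove $\tilde\alpha=0$, and every part of Proposition \ref{prop: collection3d} applies verbatim to the pair $(\tilde R,\tilde\alpha)$.

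Next I would compute $|\nabla\tilde R(x_0)|^2$ in two ways. On one hand, combining \eqref{item: collectiii} and \eqref{item: collectiv} at $x_0$, where $\tilde R(x_0)^T\tilde\alpha=\tilde\alpha$, gives $|\nabla\tilde R(x_0)|^2 = |\tilde\alpha_{skew}|^2-|\tilde\alpha_{sym}|^2$, and since $|\tilde\alpha_{sym}|^2+|\tilde\alpha_{skew}|^2=|\tilde\alpha|^2$ this becomes $|\nabla\tilde R(x_0)|^2 = 2|\tilde\alpha_{skew}|^2-|\tilde\alpha|^2$. On the other hand, splitting each row into its symmetric and skew parts and invoking \eqref{item: collectvii} yields $|\nabla\tilde R(x_0)|^2 = \sum_{i=1}^3|(\nabla\tilde R_i)_{sym}(x_0)|^2+\tfrac12|\tilde\alpha|^2$. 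Equating the two expressions isolates $\sum_{i=1}^3|(\nabla\tilde R_i)_{sym}(x_0)|^2 = 2|\tilde\alpha_{skew}|^2-\tfrac32|\tilde\alpha|^2$.

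The decisive step is to bound this same quantity from below. By \eqref{item: collectvi} it is at least $\tfrac13|\div(\tilde R)(x_0)|^2$, and by \eqref{item: collectv}, which is exactly why the normalization $\tilde R(x_0)=Id$ was arranged, $|\div(\tilde R)(x_0)|^2 = 2|\tilde\alpha_{skew}|^2$. Hence $2|\tilde\alpha_{skew}|^2-\tfrac32|\tilde\alpha|^2\ge\tfrac23|\tilde\alpha_{skew}|^2$, i.e. $|\tilde\alpha_{skew}|^2\ge\tfrac98|\tilde\alpha|^2$. This contradicts the trivial bound $|\tilde\alpha_{skew}|^2\le|\tilde\alpha|^2$ unless $|\tilde\alpha|=0$; thus $\alpha=0$. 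Once $\alpha=0$ is known, \eqref{item: collectiii} gives $|\nabla R|^2\equiv -\mathrm{tr}(0)=0$ on all of $\Omega$, so $\nabla R\equiv0$ and $R$ is constant by connectedness.

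I expect the only genuine subtlety to be the bookkeeping that makes the constants collide: the factor $\tfrac13$ from the elementary estimate \eqref{item: collectvi}, together with the defect $-\tfrac32|\tilde\alpha|^2$, is precisely what pushes the forced lower bound on $|\tilde\alpha_{skew}|^2$ strictly above its a priori maximum $|\tilde\alpha|^2$. The reduction to $R(x_0)=Id$ is the organizational key, since it converts the $R$-dependent quantities $(R^T\alpha)_{skew}$ and $\div R$ into the fixed quantity $\tilde\alpha_{skew}$ and allows \eqref{item: collectv} to be applied directly; without it one would instead have to exploit the conjugacy $\alpha R^T = R(R^T\alpha)R^T$ in order to identify $|\div R|^2=2|(R^T\alpha)_{skew}|^2$ pointwise and run the same numerical contradiction globally.
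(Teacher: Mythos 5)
Your proposal is correct and is essentially the paper's own proof: the same normalization $\tilde R = R(x_0)^T R$, $\tilde\alpha = R(x_0)^T\alpha$, and the same pointwise collision at $x_0$ of Proposition \ref{prop: collection3d} \eqref{item: collectiii}--\eqref{item: collectiv} against \eqref{item: collectv}, \eqref{item: collectvi}, \eqref{item: collectvii}, forcing $\alpha = 0$ (your bound $|\tilde\alpha_{skew}|^2 \ge \tfrac98|\tilde\alpha|^2$ is just a rearrangement of the paper's inequality $|\alpha_{skew}|^2 - |\alpha_{sym}|^2 \ge \tfrac76|\alpha_{skew}|^2 + \tfrac12|\alpha_{sym}|^2$). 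The only, slightly tidier, divergence is the conclusion: the paper invokes the classical rigidity theorem for gradients, which is why it first reduces to simply-connected $\Omega$ and then patches local constancy together, whereas you reuse \eqref{item: collectiii} with $\alpha = 0$ to get $|\nabla R|^2 \equiv 0$ directly, bypassing that detour.
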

\begin{proof}
We assume first that $\Omega$ is simply-connected.
For $\alpha = 0$ the result is the well-known result for gradients. Hence, it suffices to prove that $\alpha = 0$.
Now, let $x_0 \in \Omega$. We may assume that $R(x_0) = Id$. Otherwise consider $\tilde{R}(x) = R(x_0)^T R(x)$ and $\tilde{\alpha} = R(x_0)^T \alpha$. By Proposition \ref{prop: collection3d} \eqref{item: collectiii} and \eqref{item: collectiv} we have
\begin{equation}\label{eq: rigidity3d1}
|\nabla R|^2 = |(R^T \alpha)_{skew}|^2 - |(R^T \alpha)_{sym}|^2.
\end{equation}
On the other hand, combining Proposition \ref{prop: collection3d} \eqref{item: collectvi} and \eqref{item: collectvii} yields
\begin{equation}\label{eq: rigidity3d2}
|\nabla R|^2 = \sum_{i=1}^3 |(\nabla R_i)_{sym}|^2  + |(\nabla R_i)_{skew}|^2 \geq \frac13 |\div(R)|^2 + \frac12 |\alpha|^2.
\end{equation}
Using Proposition \ref{prop: collection3d} \eqref{item: collectv} we find from combining \eqref{eq: rigidity3d1} and \eqref{eq: rigidity3d2} at the point $x_0$
\begin{equation}
|\alpha_{skew}|^2 - |\alpha_{sym}|^2 \geq \frac23 |\alpha_{skew}|^2 + \frac12 |\alpha|^2 = \frac76 |\alpha_{skew}|^2 + \frac12|\alpha_{sym}|^2.
\end{equation}
This implies that $\alpha_{skew} = \alpha_{sym} = 0$ i.e, $\alpha=0$. This completes the proof if $\Omega$ is simply-connected. \\
Eventually we notice that around every point there exists a simply-connected neighborhood which is included in $\Omega$. Then we proved that $R$ is constant in this neighborhood i.e., $R$ is locally constant. As $\Omega$ is connected this implies that $R$ is constant.
\end{proof}

In combination with Corollary \ref{cor: regularity} in Section \ref{sec: regularity} Theorem \ref{thm: 3d} shows our main result.

\begin{theorem}\label{thm: rigiditygeneral3d}
Let $\Omega \subseteq \R^3$ be open and bounded. Then every measurable $R:\Omega \to SO(3)$ with a constant $\curl$ in the sense of distributions is constant.
\end{theorem}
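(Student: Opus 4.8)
The plan is to reduce Theorem~\ref{thm: rigiditygeneral3d} to the already-established Theorem~\ref{thm: 3d} by first upgrading the merely measurable field $R$ to a genuinely $C^2$ (indeed $C^\infty$) field. A measurable map $R\colon\Omega\to SO(3)$ is automatically bounded, since $|R|=\sqrt3$ pointwise, and satisfies $\dist(R,SO(3))=0$ almost everywhere; hence $R\in L^2_{\loc}(\Omega;\R^{3\times3})$ and the incompatible-field rigidity estimate \eqref{eq: rigidityincompatible} (in its higher-dimensional form) becomes available. Since the distributional $\curl R=\alpha$ is constant, hence $C^\infty$, the substance of the argument is the regularity statement recorded in Corollary~\ref{cor: regularity}: once $R$ is known to be smooth, Theorem~\ref{thm: 3d} applies verbatim and forces $R$ to be constant on each connected component of $\Omega$.

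To obtain the regularity I would localize \eqref{eq: rigidityincompatible} on balls $B_r(x_0)\Subset\Omega$. Because $\dist(R,SO(3))=0$, the estimate produces for each such ball a constant rotation $\bar R_{x_0,r}\in SO(3)$ with
\[
\int_{B_r(x_0)}|R-\bar R_{x_0,r}|^2\,dx \;\lesssim\; |\curl R|(B_r(x_0))^2,
\]
where the implicit constant carries the appropriate scaling in $r$. As $\curl R=\alpha$ is a bounded (here even constant) field, $|\curl R|(B_r(x_0))\sim r^3$, which feeds a Campanato/Morrey-type decay $\fint_{B_r(x_0)}|R-\bar R_{x_0,r}|^2\lesssim r^{\gamma}$ for a favorable exponent $\gamma$. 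This yields Hölder continuity of $R$, after which a difference-quotient and bootstrap argument, re-applying the estimate to finite differences of $R$, raises the regularity to $C^2$ and ultimately to $C^\infty$. More generally, the same scheme shows that a measurable rotation field with smooth distributional $\curl$ is smooth, which is precisely the content of Corollary~\ref{cor: regularity}.

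With $R\in C^\infty(\Omega;SO(3))$ in hand, its classical and distributional $\curl$ agree, so $\curl R=\alpha$ holds in the classical sense with $\alpha$ constant. Every point of the open set $\Omega$ has a simply-connected ball neighborhood contained in $\Omega$, so Theorem~\ref{thm: 3d} shows that $R$ is constant on each such neighborhood, hence locally constant, hence constant on every connected component of $\Omega$. I expect the main obstacle to lie entirely in the regularity step: correctly formulating and scaling the higher-dimensional version of \eqref{eq: rigidityincompatible} on balls, controlling the curl measure $|\curl R|(B_r)$, and iterating the Campanato/difference-quotient estimates up to $C^2$; by contrast, the final appeal to Theorem~\ref{thm: 3d} is immediate.
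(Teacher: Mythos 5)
Your overall reduction is exactly the paper's: establish that a measurable rotation field with constant distributional $\curl$ is smooth (this is Corollary \ref{cor: regularity}), then invoke Theorem \ref{thm: 3d} on simply-connected neighborhoods and use connectedness. The first half of your regularity sketch is also essentially sound and parallels the paper's Proposition \ref{prop: BV} and Remark \ref{remark: Sobolev-regularity}: localizing the incompatible rigidity estimate (in $3$d one must use the weak-$L^{3/2}$ version of \cite{LL16,LL17}, i.e.\ \eqref{eq: rigidity n>2}, and interpolate with the pointwise bound $|R|=\sqrt{3}$ to get your $L^2$ form) together with $|\curl R|(B_r)\sim r^3$ yields $\fint_{B_r}|R-\bar R_{x_0,r}|^2\lesssim r^2$, i.e.\ Lipschitz regularity by Campanato's criterion.

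The genuine gap is the bootstrap beyond Lipschitz. Your proposed mechanism --- ``re-applying the estimate to finite differences of $R$'' --- does not work: a finite difference or difference quotient $h^{-1}(R(\cdot+he_i)-R(\cdot))$ is \emph{not} a rotation field and is in general nowhere near $SO(3)$, so in \eqref{eq: rigidityincompatible} the term $\int \dist(F,SO(3))^2\,dx$ on the right-hand side is of order one rather than small, and the estimate yields no information (the vanishing of its curl does not help). Moreover, Campanato decay with \emph{constant} comparison matrices saturates at the exponent you obtained: any better decay would already force constancy, which cannot be extracted at this stage, so no iteration of the same estimate can produce $C^1$, let alone $C^2$ or $C^\infty$. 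The paper closes exactly this gap with a different idea: since $R$ is Lipschitz, Rademacher's theorem makes it differentiable a.e., and the purely algebraic identity of Proposition \ref{prop: derivative} (obtained from the skew-symmetry of $R^T\partial_i R$ and a cyclic permutation of indices) solves pointwise for $\partial_i R$ as a polynomial expression in the entries of $R$ and $\operatorname{Curl} R$. Hence the a.e.\ derivative coincides with a continuous function, so $R\in C^1$, and the same formula bootstraps inductively to $C^{k+1}$ when $\operatorname{Curl} R\in C^k$. Without this identity (or some substitute giving an equation for $DR$ itself), your scheme stalls at Lipschitz regularity, which is not enough to apply Theorem \ref{thm: 3d} as stated (it requires $C^2$).
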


\section{Regularity of Rotation Fields is Dominated by Regularity of Their Curl} \label{sec: regularity}

In this section $\Omega \subseteq \R^n$ denotes an open set. We will show that the regularity of a measurable field $R: \Omega \rightarrow SO(n)$ is determined by the rgularity of its $\operatorname{Curl}$. Precisely, we will show that if $\operatorname{Curl}(R) \in C^k(\Omega;\R^{n\times n \times n})$ for some $k\in \N$ then $R \in C^{k+1}(\Omega; \R^{n\times n})$. In particular, if $\operatorname{Curl}(R)$ is constant then $R$ is smooth.

As a first step we recall a statement from \cite{LL16,LL17}. It states that a field of rotations $R$ whose $\operatorname{Curl}$ is a finite vector-valued Radon measure is already a function of bounded variation. This result was already stated in \cite{LL16,LL17}. The used argument, which we present here for the convenience of the reader, implies local estimates which we will use to derive that $D R$ is absolutely continuous with respect to the measure $\operatorname{Curl}(R)$. The proof uses the generalized rigidity estimate from \cite{MSZ14} for fields with non-vanishing $\curl$ for dimension $2$ and the result from \cite{LL17} for higher dimensions.

\begin{proposition}\label{prop: BV}
 Let $n\geq 2$ and $\Omega \subseteq \R^n$ open and bounded. Then there exists a constant $C>0$ such that for every measurable function $R: \Omega \to SO(n)$ such that $\operatorname{Curl}(R) \in \mathcal{M}(\Omega;\R^{n\times n\times n})$ and $|\operatorname{Curl}R|(\Omega) < \infty$ it holds for every Borel set $A \subseteq \Omega$ that
 \begin{equation}\label{eq: absolutecontinuitycurl}
  |DR|(A) \leq C |\operatorname{Curl } R|(A).
 \end{equation}
In particular, $R \in BV(\Omega;\R^{n\times n})$. 
\end{proposition}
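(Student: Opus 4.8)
The plan is to transfer the global rigidity estimate \eqref{eq: rigidityincompatible} (for $n=2$ from \cite{MSZ14}, for $n\geq 3$ from \cite{LL17}) to small balls by scaling, and then to feed the resulting oscillation control into a mollification argument that bounds the gradient of a regularization of $R$ pointwise by the local mass of $\operatorname{Curl}R$. Since $R(x)\in SO(n)$ forces $\dist(R,SO(n))\equiv 0$, the rigidity estimate on the unit ball reduces to $\int_{B_1}|R-Q|^2\,dx \leq C\,|\operatorname{Curl}R|(B_1)^2$ for a suitable $Q\in SO(n)$. The change of variables $x\mapsto rx$ maps balls to balls and leaves the estimate invariant up to explicit powers of $r$: writing $\mu=\operatorname{Curl}R$, one checks that $|\mu|$ scales like $r^{1-n}$ on $B_r$, so that for every ball $B_r(x)\Subset\Omega$ there is a rotation $Q=Q_{x,r}$ with $\int_{B_r(x)}|R-Q|^2\,dy \leq C\,r^{2-n}|\mu|(B_r(x))^2$, and hence, by Cauchy--Schwarz, $\int_{B_r(x)}|R-Q|\,dy \leq C\,r\,|\mu|(B_r(x))$ with a constant independent of $x$ and $r$.

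Next I would mollify: set $R_\eps=R*\rho_\eps$ on $\{x:\dist(x,\partial\Omega)>\eps\}$ with $\supp\rho_\eps\subseteq B_\eps(0)$. Because $\int\nabla\rho_\eps=0$, I may subtract the rotation $Q_{x,\eps}$ from the previous step before differentiating and estimate $|\nabla R_\eps(x)|\leq \|\nabla\rho_\eps\|_\infty\int_{B_\eps(x)}|R-Q_{x,\eps}|\,dy \leq C\eps^{-n}|\mu|(B_\eps(x))$, using $\|\nabla\rho_\eps\|_\infty\leq C\eps^{-n-1}$. Integrating over an open set $U\Subset\Omega$ and applying Tonelli's theorem to interchange the $x$-integral with the measure $|\mu|$ gives $\int_U|\nabla R_\eps|\,dx \leq C|\mu|(U^\eps)$, where $U^\eps$ denotes the $\eps$-neighborhood of $U$. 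Taking $U=\Omega'\Subset\Omega$ shows that $\int_{\Omega'}|\nabla R_\eps|$ is bounded uniformly in $\eps$; since $R_\eps\to R$ in $L^1_{\loc}$ and $R$ is bounded, lower semicontinuity of the total variation yields $R\in BV(\Omega;\R^{n\times n})$ together with $|DR|(U)\leq C|\mu|(\overline U)$ for every open $U\Subset\Omega$ (using that $|\mu|(U^\eps)\to|\mu|(\overline U)$ as $\eps\to0$ by continuity from above).

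It remains to upgrade the neighborhood estimate $|DR|(U)\leq C|\operatorname{Curl}R|(\overline U)$ to the sharp inequality \eqref{eq: absolutecontinuitycurl} for arbitrary Borel sets $A$, and this measure-theoretic endgame is the step requiring the most care. First, for open sets $U\Subset W\subseteq\Omega$ one has $\overline U\subseteq W$ and hence $|DR|(U)\leq C|\mu|(W)$; taking the supremum over compact $K\subseteq W$ and over open $U$ with $K\subseteq U\Subset W$, inner regularity of the finite Radon measure $|DR|$ removes the closure and gives $|DR|(W)\leq C|\mu|(W)$ for every open $W\subseteq\Omega$. Outer regularity of both $|DR|$ and $|\mu|$ then extends this to all Borel sets, since $|DR|(A)=\inf_{W\supseteq A\text{ open}}|DR|(W)\leq C\inf_{W\supseteq A\text{ open}}|\mu|(W)=C|\mu|(A)$.

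I expect the main obstacle to be twofold: verifying that the rigidity constant is genuinely scale-invariant, so that the single constant furnished by \cite{MSZ14,LL17} on $B_1$ controls all balls $B_r(x)$ uniformly; and executing the final passage from the neighborhood bound to the sharp Borel estimate, where the closure produced by lower semicontinuity must be eliminated through regularity of the measures rather than by any further analytic input.
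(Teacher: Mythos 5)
Your overall strategy is sound, but one step fails as written: the claimed $L^2$ rigidity estimate for $n\geq 3$. The bound $\int_{B_1}|R-Q|^2\,dx\leq C\,|\operatorname{Curl}R|(B_1)^2$ is what \cite{MSZ14} provides in dimension $2$; what \cite{LL17} provides for $n\geq 3$ is only the weak-type estimate \eqref{eq: rigidity n>2}, i.e.\ control of $\|R-Q\|_{L^{\frac{n}{n-1},\infty}(B_1)}$ by $|\operatorname{Curl}R|(B_1)$, and the $L^2$ version with quadratic right-hand side is in fact \emph{false} in higher dimensions. Counterexample: fix a nonzero skew-symmetric $A$, a smooth cutoff $\phi$ with $\phi\equiv1$ on $[0,\tfrac12]$ and $\phi\equiv0$ on $[1,\infty)$, and set $R_\rho(x)=\exp\bigl(\phi(|x|/\rho)A\bigr)\in SO(n)$. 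Then $\operatorname{Curl}R_\rho$ is supported in $\overline{B_\rho}\setminus B_{\rho/2}$ with $|\operatorname{Curl}R_\rho|\leq C\rho^{-1}$, so $|\operatorname{Curl}R_\rho|(B_1)\leq C\rho^{n-1}$, while $\min_{Q\in SO(n)}\int_{B_1}|R_\rho-Q|^2\,dx\geq c\,\rho^n$ (any $Q$ is either order one away from $Id$, costing order one on $B_1\setminus B_\rho$, or order one away from $\exp(A)$, costing $c\rho^n$ on $B_{\rho/2}$). Since $\rho^n\gg\rho^{2(n-1)}$ as $\rho\to0$ when $n\geq3$, no constant works. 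The repair is cheap, because the only consequence you actually use is the $L^1$ bound $\int_{B_r(x)}|R-Q|\,dy\leq C\,r\,|\mu|(B_r(x))$: this follows from \eqref{eq: rigidity n>2} together with the H\"older inequality in Lorentz spaces (\cite{ON63}), $\int_{B_1}|R-Q|\,dx\leq \|\1_{B_1}\|_{L^{n,1}}\,\|R-Q\|_{L^{\frac{n}{n-1},\infty}(B_1)}$, and your scaling computation. Replace Cauchy--Schwarz by this and the rest of your argument goes through for all $n\geq2$.

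With that repair, your proof is correct and takes a genuinely different route from the paper's. The paper covers a compactly contained open set by a lattice of small cubes $q_i^{\delta}$, applies the same rigidity estimates on overlapping larger cubes $Q_i^{\delta}$, builds a piecewise constant approximant $R_\delta$, estimates the jumps $|R_i-R_j|$ across faces of neighboring cubes, and bounds $|DR_\delta|$ via \eqref{eq: jumpsetRdelta} before invoking lower semicontinuity; the Lorentz--H\"older inequality appears there too, to prove $R_\delta\to R$ in $L^1$. Your mollification variant packages the same local input more directly: the cancellation $\int\nabla\rho_\eps=0$ turns the local $L^1$ oscillation bound into the pointwise estimate $|\nabla R_\eps(x)|\leq C\eps^{-n}|\mu|(B_\eps(x))$, and Tonelli yields $\int_U|\nabla R_\eps|\,dx\leq C|\mu|(U^{\eps})$ without any combinatorics of neighboring cubes. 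The endgames essentially coincide (lower semicontinuity of the total variation, exhaustion, outer regularity of $|\mu|$); your additional step of removing the closure $\overline{U}$ by inner regularity of $|DR|$ is correct and is the price paid for the $\eps$-neighborhood, which has no analogue in the paper since its bound is phrased from the start in terms of the open set $A$. Both proofs also rely on scale-invariance of the rigidity constant, which the paper likewise asserts via a scaling argument.
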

\begin{proof} First, let $A \subseteq \Omega$ be open. For this let $\Omega' \subseteq A$ be open such that $\Omega' \subset \subset \Omega$. For $\delta > 0$ we define 
 \[
  I_{\delta} = \left\{ i \in \delta \Z^n \,|\, i + (-\delta,\delta)^n \subseteq A   \right\}
  \]
  and for $i \in I_{\delta}$ 
  \[ q_i^{\delta} = i + (-\delta/2,\delta/2)^n  \text{ and } Q_i^{\delta} =  i + (-\delta,\delta)^n.
 \]
 Then it holds for $\delta > 0$ small enough that $\Omega' \subseteq \bigcup_{i \in I_{\delta}} q_i^{\delta} \cup N \subseteq A$, where $N \subseteq \Omega$ is a set of Lebesgue measure $0$, see Figure \ref{fig: coveringbycubes}.
 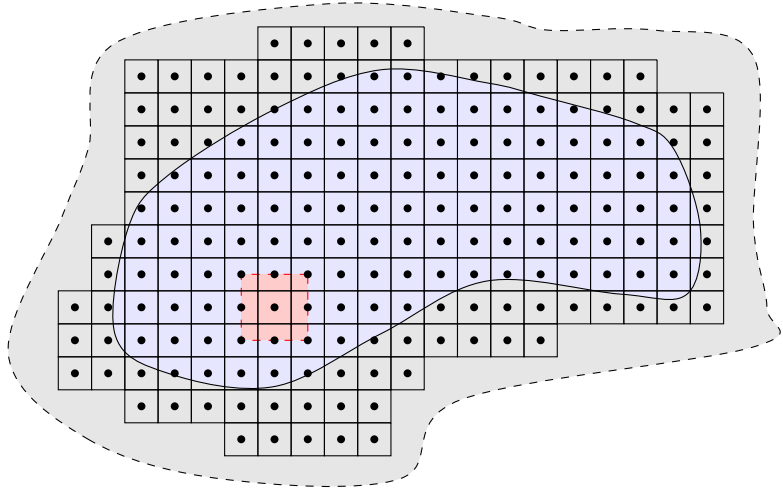
\begin{figure}[t]
\centering

\begin{tikzpicture}[scale =3.5]
\fill[black!10!white]       plot [smooth cycle, tension=0.6] coordinates 
      {(4.0,0.32)(4.3,0.00)(4.8,-0.15)(5.47,-0.15)(5.7,0.15)(6.8,0.35)
       (6.85,0.5)(6.8,0.81)(6.8,1.45)(6.43,1.55)(6.03,1.55)(5.82,1.6)
       (5.2,1.65)(4.4,1.5)(4.3,1.1)(4.2,0.84)};
\fill[blue!10!white] plot [smooth cycle, tension=0.6] coordinates 
      {(4.4,0.4)(4.6,0.25)(5,0.2)(5.4,0.4)(5.8,0.6)(6.3,0.55)(6.55,0.55)
       (6.6,0.8)(6.5,1.1)(6.35,1.2)(6,1.3)(5.8,1.35)(5.4,1.4)(5,1.25)
       (4.6,1)(4.45,0.8)};       
\draw plot [smooth cycle, tension=0.6] coordinates 
      {(4.4,0.4)(4.6,0.25)(5,0.2)(5.4,0.4)(5.8,0.6)(6.3,0.55)(6.55,0.55)
       (6.6,0.8)(6.5,1.1)(6.35,1.2)(6,1.3)(5.8,1.35)(5.4,1.4)(5,1.25)
       (4.6,1)(4.45,0.8)};
\draw[dashed] plot [smooth cycle, tension=0.6] coordinates 
      {(4.0,0.32)(4.3,0.00)(4.8,-0.15)(5.47,-0.15)(5.7,0.15)(6.8,0.35)
       (6.85,0.5)(6.8,0.81)(6.8,1.45)(6.43,1.55)(6.03,1.55)(5.82,1.6)
       (5.2,1.65)(4.4,1.5)(4.3,1.1)(4.2,0.84)};


\fill[red!20!white] (5+0.125,0.5+0.125) -- (5-0.125,0.5+0.125) -- (5-0.125,0.5-0.125) -- (5+0.125,0.5-0.125) -- (5+0.125,0.5+0.125);
\draw[red, dashed] (5+0.125,0.5+0.125) -- (5-0.125,0.5+0.125) -- (5-0.125,0.5-0.125) -- (5+0.125,0.5-0.125) -- (5+0.125,0.5+0.125);

\foreach \i in {4.5,4.625,...,6.625}{
\foreach \j in {0.5,0.625,...,1.25}{
\fill[black] (\i,\j) circle (0.015cm);
\draw (\i+0.0625,\j+0.0625) -- (\i-0.0625,\j+0.0625) -- (\i-0.0625,\j-0.0625) -- (\i+0.0625,\j-0.0625) -- (\i+0.0625,\j+0.0625);
}
}

\foreach \i in {4.375}{
\foreach \j in {0.5,0.625,...,0.75}{
\fill[black] (\i,\j) circle (0.015cm);
\draw (\i+0.0625,\j+0.0625) -- (\i-0.0625,\j+0.0625) -- (\i-0.0625,\j-0.0625) -- (\i+0.0625,\j-0.0625) -- (\i+0.0625,\j+0.0625);
}
}

\foreach \i in {4.5,4.625,4.75,...,6.375}{
\foreach \j in {1.375}{
\fill[black] (\i,\j) circle (0.015cm);
\draw (\i+0.0625,\j+0.0625) -- (\i-0.0625,\j+0.0625) -- (\i-0.0625,\j-0.0625) -- (\i+0.0625,\j-0.0625) -- (\i+0.0625,\j+0.0625);
}
}

\foreach \i in {5,5.125,...,5.5}{
\foreach \j in {1.5}{
\fill[black] (\i,\j) circle (0.015cm);
\draw (\i+0.0625,\j+0.0625) -- (\i-0.0625,\j+0.0625) -- (\i-0.0625,\j-0.0625) -- (\i+0.0625,\j-0.0625) -- (\i+0.0625,\j+0.0625);
}
}

\foreach \i in {4.375,4.5,...,5.625,5.75,5.875,6}{
\foreach \j in {0.375}{
\fill[black] (\i,\j) circle (0.015cm);
\draw (\i+0.0625,\j+0.0625) -- (\i-0.0625,\j+0.0625) -- (\i-0.0625,\j-0.0625) -- (\i+0.0625,\j-0.0625) -- (\i+0.0625,\j+0.0625);
}
}

\foreach \i in {4.375,4.5,...,5.375,5.5}{
\foreach \j in {0.25}{
\fill[black] (\i,\j) circle (0.015cm);
\draw (\i+0.0625,\j+0.0625) -- (\i-0.0625,\j+0.0625) -- (\i-0.0625,\j-0.0625) -- (\i+0.0625,\j-0.0625) -- (\i+0.0625,\j+0.0625);
}
}

\foreach \i in {4.5,4.625,4.75,...,5.25,5.375}{
\foreach \j in {0.125}{
\fill[black] (\i,\j) circle (0.015cm);
\draw (\i+0.0625,\j+0.0625) -- (\i-0.0625,\j+0.0625) -- (\i-0.0625,\j-0.0625) -- (\i+0.0625,\j-0.0625) -- (\i+0.0625,\j+0.0625);
}
}

\foreach \i in {4.875,5,5.125,5.25,5.375}{
\foreach \j in {0}{
\fill[black] (\i,\j) circle (0.015cm);
\draw (\i+0.0625,\j+0.0625) -- (\i-0.0625,\j+0.0625) -- (\i-0.0625,\j-0.0625) -- (\i+0.0625,\j-0.0625) -- (\i+0.0625,\j+0.0625);
}
}

\foreach \i in {4.25}{
\foreach \j in {0.25,0.375,0.5}{
\fill[black] (\i,\j) circle (0.015cm);
\draw (\i+0.0625,\j+0.0625) -- (\i-0.0625,\j+0.0625) -- (\i-0.0625,\j-0.0625) -- (\i+0.0625,\j-0.0625) -- (\i+0.0625,\j+0.0625);
}
}

\end{tikzpicture}

\caption{Sketch of the situation in Proposition \ref{prop: BV}. The open set $A \subseteq \Omega$ is colored in gray, the set $\Omega' \subset\subset A$ is colored in blue. The points in $I_{\delta}$ are indicated by black dots. The corresponding cubes $q_i^{\delta}$ are sketched with black boundaries. One specific of the larger cubes $Q_i^{\delta}$ is sketched in red. Note that they cover $\Omega'$ for $\delta>0$ small enough. The function $R_{\delta}$ is constant on each of the cubes $q_i^{\delta}$. Hence, $DR_{\delta}$ is concentrated on the faces of $\partial q_i^{\delta}$.  }
\label{fig: coveringbycubes}
\end{figure}
 
 Now, fix $i \in I_{\delta}$. If $n=2$ by the generalized rigidity estimate from \cite{MSZ14} there exists $R_i \in SO(2)$ such that 
 \begin{equation}\label{eq: rigidity n=2}
   \int_{ Q_i^{\delta}} |R - R_i|^2  \, dx \leq C |\operatorname{Curl } R|(Q_i^{\delta})^{2} = C |\operatorname{Curl } R|(Q_i^{\delta})^{\frac{n}{n-1}}.
 \end{equation}
 If $n>2$ we use the generalized rigidity estimate from \cite{LL17} to obtain $R_i \in SO(n)$ such that
 \begin{equation} \label{eq: rigidity n>2}
  \|R - R_i\|_{L^{\frac{n}{n-1},\infty}(Q_i^{\delta})}^{\frac{n}{n-1}} \leq C |\operatorname{Curl } R|(Q_i^{\delta})^{\frac{n}{n-1}}.
 \end{equation}
 Here $L^{\frac{n}{n-1},\infty}$ denotes the weak $L^{\frac{n}{n-1}}$-space which can be constructed as a real interpolaton spaces of the regular $L^p$-spaces via the $K$-method, see, for example, \cite{Lu09}. 
Note that by a scaling argument it can be shown that for all $\delta>0$ and $i\in I_{\delta}$ for $C>0$ one can use the constant for the domain $(0,1)^n$. In particular, $C$ in the inequality above does not depend on $\delta$ nor $i$. 

We define a function $R_{\delta}: \Omega' \to SO(n)$ by $R_{\delta}(x) = R_i$ if $x \in q_i^{\delta}$ where $i \in I_{\delta}$ (note that while each $R_i$ is defined on $Q_i^\delta$ which overlap for neighboring `$i$'s, the smaller cubes $q_i^{\delta}$ are mutually disjoint).
It follows that $R_{\delta} \in BV(\Omega';SO(n))$ and the distributional derivative of $R_{\delta}$ is concentrated on the boundaries of neighboring cubes $q_i$, namely
\begin{equation} \label{eq: jumpsetRdelta}
 |D R_{\delta}|(\Omega') = \sum_{i,j \in I_{\delta}, |i_1 - j_1| + |i_2 - j_2| = \delta} |R_i - R_j| \, \mathcal{H}^{n-1}( \partial q_i \cap \partial q_j \cap \Omega') . 
\end{equation}
 Next, we fix two neighboring indices $i,j \in I_{\delta}$ i.e., $|i_1-j_1| + |i_2 -j_2| = \delta$. Then we use \eqref{eq: rigidity n=2} to find for $n=2$ that
 \begin{align*}
  2 \cdot \delta^2 |R_i - R_j|^2 &= \int_{Q_i^{\delta} \cap Q_j^{\delta}} |R_i - R_j|^2 \, dx  \\
  &\leq 2 \left( \int_{Q_i^{\delta}} |R - R_i|^2 \, dx + \int_{Q_j^{\delta}} |R - R_j|^2 \, dx \right)  \\
  &\leq 2 C \left( |\operatorname{Curl } R|(Q^{\delta}_i)^{2} + |\operatorname{Curl } R|(Q^{\delta}_j)^{2} \right).
 \end{align*}
 In particular, we obtain 
 \[
  |R_i - R_j| \leq C' \delta^{-1} \left( |\operatorname{Curl } R|(Q^{\delta}_i) + |\operatorname{Curl } R|(Q^{\delta}_j) \right).
 \]
Similarly, one proves for $n>2$ using \eqref{eq: rigidity n>2} that
\[
  |R_i - R_j| \leq C' \delta^{-(n-1)} \left( |\operatorname{Curl } R|(Q^{\delta}_i) + |\operatorname{Curl } R|(Q^{\delta}_j)^{2} \right).
 \]
 By the finite overlap of the cubes $Q_i^{\delta}$ we derive from \eqref{eq: jumpsetRdelta} for all $n\geq 2$ that
 \begin{align}
  |D R_{\delta}|(\Omega') &\leq C' \sum_{\substack{i,j \in I^n_{\delta}, \\ |i_1 - j_1| + |i_2 - j_2| = \delta}}  \delta^{-(n-1)} \delta^{n-1} ( |\operatorname{Curl } R|(Q_i) + |\operatorname{Curl } R|(Q_j)) \nonumber \\ 
  &\leq C'' |\operatorname{Curl } R|\left(\bigcup_{i\in I_{\delta}} Q_i \right) \leq C'' \, |\operatorname{Curl } R|(A). \label{eq: lowersemi}
 \end{align}
Moreover, the H{\"o}lder inequality for Lorentz spaces (see \cite{ON63}) yields
 \[
 \int_{Q_i^{\delta}} |R_{i} - R| \, dx \leq C \delta \|R_{\delta} - R\|_{L^{\frac{n}{n-1},\infty}(Q_i^{\delta})}. 
 \]
We then estimate using \eqref{eq: rigidity n>2} and \eqref{eq: rigidity n=2}, respectively,
\begin{align*}
\int_{\Omega'} |R_{\delta} - R| \, dx &= \sum_{i \in I_{\delta}} \int_{\Omega' \cap q_i^{\delta}} |R_i - R| \, dx \\
&\leq \sum_{i \in I_{\delta}} \int_{Q_i^{\delta}} |R_i - R| \, dx \\
&\leq \sum_{i \in I_{\delta}} C \delta \|R_{\delta} - R\|_{L^{\frac{n}{n-1},\infty}(Q_i^{\delta})} \\
&\leq C \delta \, \sum_{i \in I_{\delta}} |\operatorname{Curl }R|(Q_i^{\delta}) \\
&\leq C \delta |\operatorname{Curl }R|(A). 
\end{align*}
For the last inequality we used the finite overlap of the cubes $Q_i^{\delta}$. It follows that $R_{\delta} \to R$ in $L^1(\Omega';\R^{n\times n})$. By the lower-semicontinuity of the total variation we find from \eqref{eq: lowersemi} that $R \in BV(\Omega';\R^{n\times n})$ and
\begin{equation} \label{eq: totalvariationlocal}
|D R|(\Omega')| \leq \liminf_{\delta \to 0} |DR_{\delta}|(\Omega') \leq C'' |\operatorname{Curl } R|(A).
\end{equation}
Note that the constant $C''$ can be chosen independently from $\Omega'$. 

Now we exhaust $A$ by compactly contained open sets. Precisely, we find a sequence of open sets $\Omega'_k \subset \subset A$ such that $\Omega_k' \subseteq \Omega'_{k+1}$ and $\bigcup_{k \in \N} \Omega_k' = A$. Then $DR$ is a vector-valued Radon measure on $A$ and \eqref{eq: totalvariationlocal} yields 
\[
|DR|(A) = \lim_{k \to\infty} |DR|(\Omega_k') \leq C'' |\operatorname{Curl } R|(A).
\]
For $A = \Omega$ it follows immediately that $R \in BV(\Omega;\R^{n\times n})$.

For an arbitrary Borel set $A \subseteq \Omega$, we can find for $\eps > 0$ an open set $A \subseteq O \subseteq \Omega$ such that $|\operatorname{Curl} R|(O) \leq |\operatorname{Curl} R|(A) + \varepsilon$. It follows
\[
|DR|(A) \leq |DR|(O) \leq C'' |\operatorname{Curl}R|(O) \leq C'' \left(|\operatorname{Curl} R|(A) + \varepsilon\right).
\]
Sending $\varepsilon \to 0$ yields \eqref{eq: absolutecontinuitycurl}.
\end{proof}

\begin{remark}\label{remark: Sobolev-regularity}
We note that \eqref{eq: absolutecontinuitycurl} shows that the vector-valued Radon measure $DR$ is absolutely continuous with respect to the Radon measure $|\operatorname{Curl }R|$. In particular, if $\operatorname{Curl } R \in L^1(\Omega;\R^{n\times n \times n})$ then $DR$ is absolutely continuous with respect to the Lebesgue measure. In this case by the Radon-Nikodym Theorem (see \cite[Setion 1.6]{EvGa}) we may write $D R = g \, \mathcal{L}^n$ for some $g \in L^1(\Omega;\R^{n\times n \times n})$ and obtain for almost every $x \in \Omega$
\[
|g(x)| = \lim_{r\to 0} \fint_{B_r(x)} |g(y)| \, dy \leq C \lim_{r\to 0} \fint_{{B_r(x)}} |\operatorname{Curl } R(y)| \, dy = C |\operatorname{Curl } R (x)|.
\]
 In particular, it follows that $\|g\|_{L^1} \leq C \|\operatorname{Curl } R\|_{L^1}$ which implies that directly that $R \in W^{1,1}(\Omega;\R^n)$. In addition, if $\operatorname{Curl } R \in L^{\infty}$ then $R \in W^{1,\infty}$. In this case, by the Sobolev embedding theorem (see, for example, \cite[Theorem 2.4.4]{ziemer}) $R$ can be identified with a function which is locally Lipschitz continuous.
\end{remark}

In light of Remark \ref{remark: Sobolev-regularity} we recall here Rademacher's theorem (see \cite[Theorem 3.1.6]{federer}) which states that every Lipschitz function is differentiable at almost every point.
Next, we show that for a differentiable function $R: \Omega \subseteq \R^n \to SO(n)$ the derivative $D R$ can be expressed in terms of the functions $R$ and $\operatorname{Curl } R$.

\begin{proposition}\label{prop: derivative}
Let $n\in \N$, $\Omega \subseteq \R^n$. Assume that $R: \Omega \to SO(n)$ is differentiable at a point $x\in \Omega$. Then we have for $i,k,l,p \in \{1,\dots,n\}$
\begin{align} \label{eq: derivativecurl1}
2 \left(R(x)^T (\partial_i R)(x)\right)_{kl} = &R(x)_{mk} \left(\operatorname{Curl}R(x)\right)_{mil}
+ R(x)_{mi} \left(\operatorname{Curl}R(x)\right)_{mkl} \\
&+ R(x)_{ml} \left(\operatorname{Curl}R(x)\right)_{mki} \nonumber
\end{align}
and
\begin{align} \label{eq: derivativecurl2}
2 \left((\partial_i R)(x)\right)_{pl} 
= &\left(\operatorname{Curl}R(x)\right)_{pil} 
+ R(x)_{pk} R(x)_{mi} \left(\operatorname{Curl}R(x)\right)_{mkl} \\
&+ R(x)_{pk} R(x)_{ml} \left(\operatorname{Curl}R(x)\right)_{mki}. \nonumber
\end{align}
In particular, we have for $n=3$
\begin{align} \label{eq: representationDR1}
2 \left(R(x)^T (\partial_i R)(x)\right)_{kl} = &\varepsilon_{nil} \left( R(x)^T (\curl R)(x)\right)_{kn}  + \varepsilon_{nkl} \left( R(x)^T (\curl R)(x)\right)_{in} \\
&+ \varepsilon_{nki} \left( R(x)^T (\curl R)(x)\right)_{ln}. \nonumber
\end{align}
and 
\begin{align} \label{eq: representationDR2}
2 \left((\partial_i R)(x)\right)_{pl} = &\varepsilon_{nil} \left( (\curl R)(x)\right)_{pn}  + \varepsilon_{nkl} R(x)_{pk} \left( R(x)^T (\curl R)(x)\right)_{in} \\
&+ \varepsilon_{nki} R(x)_{pk} \left( R(x)^T (\curl R)(x)\right)_{ln}. \nonumber
\end{align}
\end{proposition}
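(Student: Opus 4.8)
The plan is to prove the algebraic identity \eqref{eq: derivativecurl1} first; the remaining three formulas will then follow by elementary contractions. Throughout I abbreviate $B_{ikl} := \left(R(x)^T (\partial_i R)(x)\right)_{kl} = R(x)_{mk}\,(\partial_i R)(x)_{ml}$ for the $R^T$-transformed derivative and $A_{mjk} := (\operatorname{Curl}R)(x)_{mjk} = (\partial_j R)(x)_{mk} - (\partial_k R)(x)_{mj}$, all evaluated at the point of differentiability $x$, whose argument I suppress. Both objects are well defined since differentiability of $R$ at $x$ guarantees the existence of all the relevant partial derivatives there.

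The first ingredient is the orthogonality constraint. Since $R_{mk}R_{ml} = \delta_{kl}$ holds on a neighborhood of $x$ and $R$ is differentiable at $x$, the product rule applied at $x$ gives $(\partial_i R_{mk})R_{ml} + R_{mk}(\partial_i R_{ml}) = 0$, i.e. $B_{ilk} + B_{ikl} = 0$, so that
\[
B_{ikl} = -B_{ilk}.
\]
This skew-symmetry in the last two indices is the only structural fact about $R$ beyond differentiability that I will use.

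The main step is then a direct verification of \eqref{eq: derivativecurl1}. I would expand each of the three terms on its right-hand side through the definition of $A$, turning every summand into a difference of two $B$'s: for instance $R_{mk}A_{mil} = R_{mk}\partial_i R_{ml} - R_{mk}\partial_l R_{mi} = B_{ikl} - B_{lki}$, and analogously $R_{mi}A_{mkl} = B_{kil} - B_{lik}$ and $R_{ml}A_{mki} = B_{kli} - B_{ilk}$. Adding the three contributions yields six $B$-terms, and applying $B_{abc} = -B_{acb}$ to three of them collapses the four ``mixed'' terms in cancelling pairs, leaving precisely $2B_{ikl} = 2\left(R^T \partial_i R\right)_{kl}$. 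This is \eqref{eq: derivativecurl1}. Conceptually this is the familiar reconstruction of a full first derivative from its two natural antisymmetric parts, the skew part forced by the orthogonality constraint and the skew part recorded by the $\operatorname{Curl}$, in the same spirit as the recovery of Christoffel symbols from cyclic permutations of metric derivatives.

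Finally, I would deduce the remaining identities. Contracting \eqref{eq: derivativecurl1} with $R_{pk}$ and using $R_{pk}R_{mk} = \delta_{pm}$ converts the first term to $(\operatorname{Curl}R)_{pil}$ and the left-hand side to $2(\partial_i R)_{pl}$, giving \eqref{eq: derivativecurl2}. For $n=3$ I would pass from the rowwise $\operatorname{Curl}$ to the rowwise $\curl$ via the Levi-Civita identity $(\operatorname{Curl}R)_{mjk} = \varepsilon_{jkn}(\curl R)_{mn}$, which follows from $\varepsilon_{jkn}\varepsilon_{nlp} = \delta_{jl}\delta_{kp} - \delta_{jp}\delta_{kl}$; substituting this into \eqref{eq: derivativecurl1} and \eqref{eq: derivativecurl2}, writing $R_{mk}(\curl R)_{mn} = (R^T\curl R)_{kn}$, and using the cyclic invariance $\varepsilon_{iln} = \varepsilon_{nil}$ to reorder indices yields \eqref{eq: representationDR1} and \eqref{eq: representationDR2}. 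There is no analytic difficulty here; the only real obstacle is keeping the six-index bookkeeping and the Levi-Civita sign conventions consistent across all four identities.
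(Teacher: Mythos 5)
Your proposal is correct and follows essentially the same route as the paper: both proofs rest on the skew-symmetry of $R^T\partial_i R$ (from differentiating $R^TR = Id$) together with the same cyclic-permutation bookkeeping over the indices $i,k,l$ — the paper builds the right-hand side up from $2(R^T\partial_i R)_{kl}$ by inserting pairs that vanish by skew-symmetry, while you expand the right-hand side into six terms and cancel, which is the same computation read in the opposite direction. The derivations of \eqref{eq: derivativecurl2} by contraction with $R_{pk}$ and of \eqref{eq: representationDR1}--\eqref{eq: representationDR2} via the Levi-Civita contraction identity coincide with the paper's.
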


\begin{proof}
Since $R(x) \in SO(n)$ for all $x \in \Omega$ it follows for all $i\in \{1,\dots,n\}$ that $R(x)^T (\partial_i R)(x)$ is skew-symmetric. Consequently we find that
\begin{align*}
2 \left(R(x)^T (\partial_i R)(x)\right)_{kl} =& \left(R(x)^T (\partial_i R)(x)\right)_{kl} - \left(R(x)^T (\partial_i R)(x)\right)_{lk} \\
=& \left[\left(R(x)^T (\partial_i R)(x)\right)_{kl} - \left(R(x)^T (\partial_i R)(x)\right)_{lk} \right] \\
&+ \left[\left(R(x)^T (\partial_k R)(x)\right)_{il} + \left(R(x)^T (\partial_k R)(x)\right)_{li} \right] \\
&- \left[\left(R(x)^T (\partial_l R)(x)\right)_{ki} + \left(R(x)^T (\partial_l R)(x)\right)_{ik} \right] \\
= &\left[\left(R(x)^T (\partial_i R)(x)\right)_{kl} - \left(R(x)^T (\partial_l R)(x)\right)_{ki} \right] \\
&+ \left[\left(R(x)^T (\partial_k R)(x)\right)_{il} - \left(R(x)^T (\partial_l R)(x)\right)_{ik} \right] \\
&+ \left[\left(R(x)^T (\partial_k R)(x)\right)_{li} - \left(R(x)^T (\partial_i R)(x)\right)_{lk} \right] \\
=&  R(x)_{mk} \left(\operatorname{Curl}R(x)\right)_{mil} 
+ R(x)_{mi} \left(\operatorname{Curl}R(x)\right)_{mkl}  \\
&+ R(x)_{ml} \left(\operatorname{Curl}R(x)\right)_{mki}. 
\end{align*}
This shows \eqref{eq: derivativecurl1}. Then \eqref{eq: derivativecurl2} follows immediately by multiplication from the left with $R$. Now, we notice that for $n=3$, it holds $(\operatorname{Curl}R)_{qrs} = \eps_{nrs} (\curl R)_{qn}$. Plugging this identity into \eqref{eq: derivativecurl1} and \eqref{eq: derivativecurl2} yields immediately \eqref{eq: representationDR1} and \eqref{eq: representationDR2}.
\end{proof}

\begin{remark}
We remark that Proposition \ref{prop: derivative} implies that there exists $C>0$ such that for all $R\in C^1(\Omega;SO(n))$ it holds (c.f.~also \cite{NM08})
\[
\| DR \|_{L^{\infty}} \leq C \| \operatorname{Curl } R\|_{L^{\infty}}.
\]
\end{remark}

Combining Proposition \ref{prop: BV}, Remark \ref{remark: Sobolev-regularity} and Proposition \ref{prop: derivative} allows us to prove regularity of rotation fields with a regular $\curl$.

\begin{theorem}
Let $n,k \in \N$, $\Omega \subseteq \R^n$ open and $R: \Omega\to SO(n)$ measurable. Assume that $\operatorname{Curl} R = f$ in the sense of distributions for $f \in C^k(\Omega;\R^{n\times n\times n})$. Then $R \in C^{k+1}(\Omega;\R^{n\times n})$.
\end{theorem}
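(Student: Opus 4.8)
The plan is to combine the a priori regularity furnished by Proposition~\ref{prop: BV} with the pointwise algebraic formula of Proposition~\ref{prop: derivative}, and then to bootstrap. Since regularity is a local property and $f \in C^k(\Omega)$ is in particular locally bounded, I would first fix an arbitrary open set $\Omega' \subset\subset \Omega$. On $\Omega'$ we have $f \in L^{\infty}(\Omega') \subseteq L^1(\Omega')$, so $\operatorname{Curl} R = f\,\mathcal{L}^n$ is a finite vector-valued Radon measure and Proposition~\ref{prop: BV} applies, giving $R \in BV(\Omega';\R^{n\times n})$ with $|DR| \ll |\operatorname{Curl} R|$. By Remark~\ref{remark: Sobolev-regularity}, $\operatorname{Curl} R \in L^{\infty}(\Omega')$ forces $R \in W^{1,\infty}(\Omega')$, so $R$ admits a locally Lipschitz representative; by Rademacher's theorem this representative is differentiable at $\mathcal{L}^n$-almost every point of $\Omega'$.

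At every such point Proposition~\ref{prop: derivative}, in particular \eqref{eq: derivativecurl2}, expresses the classical derivative as
\[
2(\partial_i R)_{pl} = f_{pil} + R_{pk}R_{mi}\, f_{mkl} + R_{pk}R_{ml}\, f_{mki},
\]
that is, $\partial_i R$ equals a fixed polynomial $P_i(R,f)$ in the entries of $R$ and of $f = \operatorname{Curl} R$, quadratic in $R$ and linear in $f$. Since $R \in W^{1,\infty}(\Omega')$, this almost-everywhere identity coincides with the weak derivative of $R$. Its right-hand side is a polynomial in the continuous functions $R$ (Lipschitz, hence continuous) and $f \in C^0$, and is therefore continuous; as a Sobolev function whose weak gradient admits a continuous representative is $C^1$, we conclude $R \in C^1(\Omega')$.

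From here I would bootstrap by induction on $m$. The induction hypothesis is $R \in C^m(\Omega')$ for some $0 \le m \le k$; the identity $\partial_i R = P_i(R,f)$ now holds classically at every point, and its right-hand side is a polynomial in $R \in C^m$ and $f \in C^k \subseteq C^m$, hence itself lies in $C^m(\Omega')$. Therefore $\partial_i R \in C^m(\Omega')$ for every $i$, i.e.\ $R \in C^{m+1}(\Omega')$. Iterating from $m=0$ up to $m=k$ yields $R \in C^{k+1}(\Omega')$, and since $\Omega' \subset\subset \Omega$ was arbitrary, $R \in C^{k+1}(\Omega)$. The main obstacle is the first, non-classical step: one must carefully justify that the almost-everywhere formula of Proposition~\ref{prop: derivative} really is the distributional derivative of $R$, and that the resulting continuity of $DR$ upgrades the merely Lipschitz field to a genuine $C^1$ field. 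Once $R$ is known to be $C^1$, the remaining bootstrap is entirely classical, and the explicit polynomial structure of $P_i$ renders each step routine.
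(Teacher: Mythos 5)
Your proposal is correct and follows essentially the same route as the paper: localize to reduce to bounded $\operatorname{Curl}$, invoke Proposition~\ref{prop: BV} and Remark~\ref{remark: Sobolev-regularity} to get $W^{1,\infty}$, apply Rademacher's theorem so that Proposition~\ref{prop: derivative} identifies the weak derivative a.e.\ with a continuous polynomial expression in $R$ and $f$, conclude $R\in C^1$, and then bootstrap. Your write-up is if anything slightly more careful than the paper's, notably in making the localization via $\Omega'\subset\subset\Omega$ explicit and in stating the induction on $m\le k$ cleanly.
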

\begin{proof} First, let $k=0$.
As differentiability is a local property we may assume that $\Omega$ is bounded and $\operatorname{Curl } R$ is bounded. By Remark \ref{remark: Sobolev-regularity} it follows that $R \in W^{1,\infty}(\Omega;\R^{n\times n})$. By the Sobolev-embedding theorem it can hence be identified with a function which is locally Lipschitz-continuous. Then Rademacher's theoerem (see, for example, \cite[Theorem 3.1.6]{federer}) yields that $R$ is differentiable almost everywhere and that at almost every point the classical and the weak derivative coincide. Then Proposition \ref{prop: derivative} implies that the weak derivative $D R$ is for almost every point the sum of terms which are products of components of $R$ and $\curl R$. Thus $DR $ can be represented through a continuous function. This implies that $R \in C^1(\Omega;SO(n))$ which is the statement for $k=0$. If $k > 0$ we can bootstrap this argument. We see now that $DR$ is the sum of products of terms which are $C^1$ (components of $R$) or $C^k$ (components of $\curl R$). Hence, by the product rule $R \in C^2$ and second derivatives are sums of products of $R$, $DR$, $\curl R$, or $D\curl R$. This is the statement for $k=1$. All appearing terms of $DR$ are again $C^1$ if $k\geq 2$. Inductively, one can show that derivatives of order $k+1$ exist and are given by sums of products which consist of components of the first $k$ derivatives of $R$ and $\curl R$.
\end{proof}

An immediate consequence is that rotation fields with a constant $\operatorname{Curl}$ in the sense of distributions are necessarily smooth.

\begin{corollary}\label{cor: regularity}
Let $n \in \N$, $\Omega \subseteq \R^n$ open and $R: \Omega\to SO(n)$ be measurable. Assume that the distributional $\operatorname{Curl} R$ is locally constant. Then $R \in C^{\infty}(\Omega;\R^{n\times n})$.
\end{corollary}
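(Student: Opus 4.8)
The plan is to deduce this directly from the regularity theorem proved immediately above, applied for every $k \in \N$. First I would make the hypothesis precise: to say that the distributional $\operatorname{Curl} R$ is locally constant means that there is a function $f : \Omega \to \R^{n\times n\times n}$, constant on each connected component of $\Omega$, with $\operatorname{Curl} R = f$ in the sense of distributions. Such an $f$ is automatically smooth: the connected components of the open set $\Omega$ are themselves open, $f$ is constant on each of them, and hence all classical partial derivatives of $f$ exist and vanish. In particular $f \in C^k(\Omega; \R^{n\times n\times n})$ for \emph{every} $k \in \N$.

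With this observation the argument is immediate. Fix an arbitrary $k \in \N$. Since $\operatorname{Curl} R = f$ in the sense of distributions with $f \in C^k(\Omega; \R^{n\times n\times n})$, the preceding theorem applies and yields $R \in C^{k+1}(\Omega; \R^{n\times n})$. As $k \in \N$ was arbitrary, $R$ possesses continuous partial derivatives of every order, which is exactly the assertion $R \in C^\infty(\Omega; \R^{n\times n})$.

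There is no genuine obstacle here: the entire analytic content has already been established in the preceding theorem (which in turn rests on Proposition \ref{prop: BV}, Remark \ref{remark: Sobolev-regularity} and Proposition \ref{prop: derivative}), and the only thing left to check is the elementary fact that a locally constant function lies in $C^k$ for all $k$, so that the theorem may be invoked with $k$ unbounded. The corollary is thus simply the preceding theorem read along $k \to \infty$.
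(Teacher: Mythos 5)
Your proof is correct and matches the paper's intent exactly: the paper states the corollary as an immediate consequence of the preceding regularity theorem, which is precisely your argument of noting that a locally constant function on an open set is $C^k$ for every $k$ and then applying the theorem for each $k$ to conclude $R \in C^{k+1}(\Omega;\R^{n\times n})$ for all $k$, hence $R \in C^{\infty}(\Omega;\R^{n\times n})$.
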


\bibliographystyle{abbrv}
\bibliography{constant_curl}

\end{document}